\documentclass[oneside,english]{amsart}
\usepackage[T1]{fontenc}
\usepackage[latin9]{inputenc}
\usepackage{amsthm}
\usepackage{amstext}
\usepackage{amssymb}
\usepackage{esint}
\usepackage{graphicx}
\usepackage{enumerate}

\usepackage{graphicx}
\graphicspath{{noiseimages/}}

\makeatletter
\newtheorem{theorem}{Theorem}[section]
\newtheorem{lemma}[theorem]{Lemma}

\newtheorem{corollary}[theorem]{Corollary}
\numberwithin{figure}{section}
\theoremstyle{definition}

\newtheorem{example}[theorem]{Example}

\theoremstyle{remark}
\newtheorem{remark}[theorem]{Remark}

\numberwithin{equation}{section}
\makeatother

\usepackage{babel}

	\DeclareMathOperator{\dist}{dist}
	\DeclareMathOperator{\loc}{loc}
	
	\DeclareMathOperator*{\esssup}{ess\,sup}

	\DeclareMathOperator{\Imag}{Im}

\begin{document}

\title[Quasiconformal Mappings and Neumann Eigenvalues]{Quasiconformal Mappings and Neumann Eigenvalues of Divergent Elliptic Operators}

\author{V.~Gol'dshtein, V.~Pchelintsev, A.~Ukhlov}

\begin{abstract}
We study spectral properties of divergence form elliptic operators $-\textrm{div} [A(z) \nabla f(z)]$ with the Neumann boundary condition in planar domains (including some fractal type domains), that satisfy to the quasihyperbolic boundary conditions. Our method is based on an interplay between quasiconformal mappings, elliptic operators and composition operators on Sobolev spaces.
\end{abstract}
\maketitle
\footnotetext{\textbf{Key words and phrases:} Elliptic equations, Sobolev spaces, quasiconformal mappings.}
\footnotetext{\textbf{2010
Mathematics Subject Classification:} 35P15, 46E35, 30C60.}

\section{Introduction}

In this paper we apply methods of the (quasi)conformal geometry to spectral problems for $A$-divergent form elliptic operators
with the Neumann boundary condition
\begin{equation}\label{EllDivOper}
L_{A}=-\textrm{div} [A(z) \nabla f(z)], \quad z=(x,y)\in \Omega, \quad
\left\langle A(z) \nabla f, n \right\rangle\big|_{\partial \Omega}=0,
\end{equation}
in the large class of (non)convex domains $\Omega \subset \mathbb C$ that satisfy the quasihyperbolic boundary condition \cite{KOT01,KOT02}. Here matrix functions $A(z)=\left\{a_{kl}(z)\right\}$  with measurable entries $a_{kl}(z)$ belongs to a class  $M^{2 \times 2}(\Omega)$ of all $2 \times 2$ symmetric matrix functions that satisfy to an additional condition $\textrm{det} A=1$ a.e. and to the uniform ellipticity condition:
\begin{equation}\label{UEC}
\frac{1}{K}|\xi|^2 \leq \left\langle A(z) \xi, \xi \right\rangle \leq K |\xi|^2 \,\,\, \text{a.e. in}\,\,\, \Omega,
\end{equation}
for every $\xi \in \mathbb C$ and for some $1\leq K< \infty$.

Such type of elliptic operators arise in various problems of mathematical physics (see, for example, \cite{AIM}).

The suggested method is based on a (quasi)conformal representation of a non smooth Riemannian metric in the domain $\Omega$:
\[
ds^2=a_{11}(x,y)dx^2+2a_{12}(x,y)dxdy+a_{22}(x,y)dy^2
\]
induced by the matrix $A$. The complex dilatation $\mu$ of the corresponding quasiconformal mapping $\varphi$ from $\Omega$ to the unit disc $\mathbb D\subset\mathbb C$ can be calculated using the matrix $A$ (see, for example, \cite[p. 412]{AIM}). Inverse, if the complex dilatation $\mu$ is given then the matrix $A$ can be reproduced. It means that there is one to one correspondence between the matrices and the complex dilatations. By the construction the quasiconformal mapping $\varphi:\Omega\to\mathbb D$ is an isometry of the domain $\Omega$ with this new Riemannian metric $ds$ (induced by the matrix $A$) and the unit disc $\mathbb{D}$ with the hyperbolic metric. It is reasonable to call such metric $ds$ as an $A$-quasiconformal metric and the quasiconformal mapping $\varphi$ as an $A$-quasiconformal mapping (i.e. quasiconformal mapping agreed with the matrix $A$).

Hence, we can conclude that an $A$-quasiconformal mapping is conformal (i.e. preserve angles) in this $A$-quasiconformal Riemannian metric.

Let us remind that conformal homeomorphisms induce isometries of uniform Sobolev spaces $L^{1,2}(\Omega)$ and $L^{1,2}(\mathbb D)$. In the present article we prove that  $A$-quasi\-con\-for\-mal mappings induce isometries of an uniform Sobolev space $L_A^{1,2}(\Omega)$ (Sobolev space agreed with the matrix $A$) and the uniform Sobolev space $L^{1,2}(\mathbb D)$.

\vskip 0.2cm
{\bf Conjecture.} {\it Spectral properties of the $A$-divergent form elliptic operators with the Neumann boundary condition depends on
$A$-quasiconformal geometry of domains only.}
\vskip 0.2cm

The suggested method is based on connections between composition operators on Sobolev spaces, elliptic operators and quasiconformal mappings. We prove that $\varphi:\Omega\to\Omega'$ is a quasiconformal mapping  agreed with the matrix $A$ (i.e. induced by $A$ via the Beltrami equation)  if and only if
\[
\iint\limits_\Omega \left\langle A(z)\nabla f(\varphi(z)),\nabla f(\varphi(z))\right\rangle\,dxdy=\iint\limits_{\Omega'} \left\langle \nabla f(w),\nabla f(w)\right\rangle\,dudv,
\]
for all $f\in L^{1,2}(\Omega')$.

This result permits us to introduce an $A$-norm in the corresponding uniform Sobolev space $L^{1,2}_A$. For this norm the $A$-quasiconformal mappings play a role similar to conformal mappings for the Laplace operator and uniform Sobolev spaces $L^{1,2}$. In particular, we prove that the $A$-quasiconformal mappings generalize the well known property of conformal mappings to generate isometries of uniform Sobolev spaces $L^{1,2}(\Omega)$ and $L^{1,2}(\Omega')$ (see, for example, \cite{C50}). In terms of the $A$-quasiconformal mappings we also refine the functional characterization of quasiconformal mappings, obtained in the article \cite{VG75} in the terms of isomorphisms of uniform Sobolev spaces $L^{1,2}$.

Short historical remarks. Spectral estimates of elliptic operators eigenvalues represent an important part of the modern spectral theory (see, for example,
\cite{A98,AB06,BLL,BCT15,EP15,ENT,FNT,LM98}).
The classical upper estimate for the first non-trivial Neumann eigenvalue of the Laplace operator (the $A$-divergent form elliptic operator with the matrix $A=I$)
\begin{equation*}
\mu_1(I,\Omega):=\mu_1(\Omega)\leq \mu_1(\Omega^{\ast})=\frac{{j_{1,1}'^2}}{R^2_{\ast}}
\end{equation*}
was proved by Szeg\"o \cite{S54} for simply connected planar domains via a conformal mappings technique ("the method of conformal normalization"). In this inequality $j_{1,1}'$ denotes the first positive zero of the derivative of the Bessel function $J_1$ and $\Omega^{\ast}$ is a disc of the same area as $\Omega$ with $R_{\ast}$ as its radius.

In convex domains $\Omega\subset\mathbb R^n$, $n\geq 2$,  the classical lower estimates of the Neumann eigenvalues of the Laplace operator \cite{PW} state that
\begin{equation}
\label{PW}
\mu_1(\Omega)\geq \frac{\pi^2}{d(\Omega)^2},
\end{equation}
where $d(\Omega)$ is a diameter of a convex domain $\Omega$. Similar estimates for the non-linear $p$-Laplace operator, $p\ne 2$, were obtained much later in \cite{ENT}.

Unfortunately, for non-convex domains $\mu_1(\Omega)$ can not be characterized in the terms of its Euclidean diameters. It can be seen by considering a domain consisting of two identical squares connected by a thin corridor \cite{BCDL16}.

Let us return to our studies. In the previous works  \cite{GPU17_2,GPU19,GU16} we returned to applications of a (quasi)conformal mappings techniques to such estimates in rough (non-convex) domains. Let us remind that some applications of a conformal mappings to this problem can be found in \cite{S54}). We used (quasi)conformal mappings in a framework of composition operators on Sobolev spaces \cite{U93,VG75,VU02}. This method permitted us to obtain lower estimates of the first non-trivial Neumann-Laplace eigenvalue $\mu_1(\Omega)$  in the terms of the hyperbolic (conformal) radius of $\Omega$ for a large class of domains that includes some fractal domains.

In this paper we use the $A$-quasiconformal mappings via the composition operator theory. The corresponding composition operators (isometries for the norm induced by matrices $A$) allows us reduce the spectral problem for the divergence form elliptic operator \eqref{EllDivOper} defined in a simply connected domain $\Omega\subset\mathbb C$ to a weighted spectral problem for the Laplace operator in the unit disc $\mathbb D\subset\mathbb C$.

Roughly speaking, by the chain rule applied to a function $f(z)=g \circ \varphi(z)$ \cite{GNR18}, we have
\begin{equation}\label{QCE}
-\textrm{div} [A(z) \nabla f(z)] = -\textrm{div} [A(z) \nabla g(\varphi(z))]= -\left|J(w,\varphi^{-1})\right|^{-1} \Delta g(w),
\end{equation}
where the weight $J(w,\varphi^{-1})$ is the Jacobian of the inverse mapping $\varphi^{-1}:\mathbb D\to\Omega$.

As an example we consider the divergent form operator $-\textrm{div} [A(z) \nabla f(z)]$ with the matrix
$$
A(z)=\begin{pmatrix} \frac{a+b}{a-b} & 0 \\ 0 &  \frac{a-b}{a+b} \end{pmatrix},\,\,a>b\geq 0,
$$
defined in the interior of ellipse $\Omega_e$ with semi-axes $a+b$ and $a-b$.  By Theorem~\ref{T4.7} we have
$$
\mu_1(A,\Omega_e) \geq \frac{(j'_{1,1})^2}{a^2-b^2},
$$
what is better (Example~\ref{example1}) than the lower estimate obtained by using the classical estimate (\ref{PW}) and the uniform ellipticity condition:
$$
\mu_1(A,\Omega_e) \geq \frac{\pi^2}{4(a+b)^2} \frac{a-b}{a+b}.
$$
For thin ellipses, i.e $a+b$ fixed and $(a-b)$ tends to zero an asymptotic of our estimate is $\infty$ when the classical asymptotic is $0$.


The application of the composition operators theory to spectral problems of the $A$-divergent form elliptic operators is based on reducing of a positive quadratic form
\[
ds^2=a_{11}(x,y)dx^2+2a_{12}(x,y)dxdy+a_{22}(x,y)dy^2
\]
defined in a planar domain $\Omega$, by means of a quasiconformal change of variables, to the canonical form
\[
ds^2=\Lambda(du^2+dv^2),\,\, \Lambda\neq 0,\,\, \text{a.e. in}\,\, \Omega',
\]
given that $a_{11}a_{22}-a^2_{12} \geq \kappa_0>0$, $a_{11}>0$, almost everywhere in $\Omega$ \cite{Ahl66, AIM, BGMR}. Note that this fact can be extended to linear operators of the form $\textrm{div} [A(z) \nabla f(z)]$, $z=x+iy$, for matrix function $A \in M^{2 \times 2}(\Omega)$.

Let $\xi(z)=\Re(\varphi(z))$ be a real part of a quasiconformal mapping $\varphi(z)=\xi(z)+i \eta(z)$, which satisfies to the Beltrami equation:
\begin{equation}\label{BelEq}
\varphi_{\overline{z}}(z)=\mu(z) \varphi_{z}(z),\,\,\, \text{a.e. in}\,\,\, \Omega,
\end{equation}
where
$$
\varphi_{z}=\frac{1}{2}\left(\frac{\partial \varphi}{\partial x}-i\frac{\partial \varphi}{\partial y}\right) \quad \text{and} \quad
\varphi_{\overline{z}}=\frac{1}{2}\left(\frac{\partial \varphi}{\partial x}+i\frac{\partial \varphi}{\partial y}\right),
$$
with the complex dilatation $\mu(z)$ is given by
\begin{equation}\label{ComDil}
\mu(z)=\frac{a_{22}(z)-a_{11}(z)-2ia_{12}(z)}{\det(I+A(z))},\quad I= \begin{pmatrix} 1 & 0 \\ 0 & 1 \end{pmatrix}.
\end{equation}
We call this quasiconformal mapping (with the complex dilatation $\mu$ defined by (\ref{ComDil})) as an $A$-quasiconformal mapping.

Note that the uniform ellipticity condition \eqref{UEC} can be reformulated as
\begin{equation}\label{OVCE}
|\mu(z)|\leq \frac{K-1}{K+1},\,\,\, \text{a.e. in}\,\,\, \Omega.
\end{equation}

Conversely, using the complex dilatation $\mu$ we can obtain from \eqref{ComDil} (see, for example, \cite[p.412]{AIM}) the following representation of the matrix $A$ :
\begin{equation}\label{Matrix-F}
A(z)= \begin{pmatrix} \frac{|1-\mu|^2}{1-|\mu|^2} & \frac{-2 \Imag \mu}{1-|\mu|^2} \\ \frac{-2 \Imag \mu}{1-|\mu|^2} &  \frac{|1+\mu|^2}{1-|\mu|^2} \end{pmatrix},\,\,\, \text{a.e. in}\,\,\, \Omega.
\end{equation}

So, given any $A \in M^{2 \times 2}(\Omega)$, one produced, by \eqref{OVCE}, the complex dilatation $\mu(z)$, for which, in turn, the Beltrami equation \eqref{BelEq} induces a quasiconformal homeomorphism $\varphi:\Omega \to \varphi(\Omega)$ as its solution, by the Riemann measurable mapping theorem (see, for example, \cite{Ahl66}). We will say that the matrix function $A$ induces the corresponding $A$-quasiconformal homeomorphism $\varphi$ or that $A$ and $\varphi$ are agreed. The $A$-quasiconformal mapping $\psi: \Omega \to \mathbb D$ of simply connected domain $\Omega \subset \mathbb C$
onto the unit disc $\mathbb D \subset \mathbb C$ can be obtained as a composition of $A$-quasiconformal homeomorphism $\varphi:\Omega \to \varphi(\Omega)$ and a conformal mapping $\omega : \varphi(\Omega) \to \mathbb D$.

So, by the given an $A$-divergent form elliptic operator defined in a domain $\Omega\subset\mathbb C$ we construct an $A$-quasiconformal mapping $\psi: \Omega \to \mathbb D$ with a metric quasiconformality coefficient
$$
K=\frac{1+\|\mu\mid L^{\infty}(\Omega)\|}{1-\|\mu\mid L^{\infty}(\Omega)\|},
$$
where $\mu$ defined by (\ref{ComDil}).

We prove that any $A$-quasiconformal mapping $\varphi: \Omega \to \Omega'$ induces an isometry of the spaces $L^{1,2}_A(\Omega)$ and $L^{1,2}(\Omega')$. This is the main technical result of this paper about Sobolev spaces. Using applications of quasiconformal mappings to the Sobolev type embedding theorems \cite{GG94,GU09}, we prove discreteness of the spectrum of the divergence form elliptic operators $-\textrm{div} [A(z) \nabla f(z)]$ with the Neumann boundary condition. Well-known estimates of constants in the Sobolev-Poincar\'e inequality for the unit disc and the previous isometry result in the framework of the composition operator theory permit us to obtain lower estimates of Neumann eigenvalues in the terms of integrals of derivatives of $A$-quasiconformal mappings for a large class of rough domains that includes a subclass of domains with fractal boundaries (quasidiscs).

From geometrical point of view it means that we study a class domains $\Omega\subset\mathbb C$ equipped with the corresponding quasiconformal geometry. Any such domain can be considered as a Riemannian manifold and we suppose that our estimates of Neumann eigenvalues are closely connected to spectral estimates of the Beltrami-Laplace operator.

\section{Sobolev spaces and $A$-quasiconformal mappings}

Let $E \subset \mathbb C$ be a measurable set on the complex plane and $h:E \to \mathbb R$ be a positive a.e. locally integrable function i.e. a weight. The weighted Lebesgue space $L^p(E,h)$, $1\leq p<\infty$,
is the space of all locally integrable functions endowed with the following norm:
$$
\|f\,|\,L^{p}(E,h)\|= \left(\iint\limits_E|f(z)|^ph(z)\,dxdy \right)^{\frac{1}{p}}< \infty.
$$

The two-weighted Sobolev space $W^{1,p}(\Omega,h,1)$, $1\leq p< \infty$, is defined
as the normed space of all locally integrable weakly differentiable functions
$f:\Omega\to\mathbb{R}$ endowed with the following norm:
\[
\|f\mid W^{1,p}(\Omega,h,1)\|=\|f\,|\,L^{p}(\Omega,h)\|+\|\nabla f\mid L^{p}(\Omega)\|.
\]

In the case $h=1$ this weighted Sobolev space coincides with the classical Sobolev space $W^{1,p}(\Omega)$.

The seminormed Sobolev space $L^{1,p}(\Omega)$, $1\leq p< \infty$,
is the space of all locally integrable weakly differentiable functions $f:\Omega\to\mathbb{R}$ endowed
with the following seminorm:
\[
\|f\mid L^{1,p}(\Omega)\|=\|\nabla f\mid L^p(\Omega)\|, \,\, 1\leq p<\infty.
\]

We also need a weighted seminormed Sobolev space $L_{A}^{1,2}(\Omega)$ (associated with the matrix $A$), defined
as the space of all locally integrable weakly differentiable functions $f:\Omega\to\mathbb{R}$ endowed with the following norm:
\[
\|f\mid L_{A}^{1,2}(\Omega)\|=\left(\iint\limits_\Omega \left\langle A(z)\nabla f(z),\nabla f(z)\right\rangle\,dxdy \right)^{\frac{1}{2}}.
\]

The corresponding  Sobolev space $W^{1,2}_{A}(\Omega)$ is defined
as the normed space of all locally integrable weakly differentiable functions
$f:\Omega\to\mathbb{R}$ endowed with the following norm:
\[
\|f\mid W^{1,2}_{A}(\Omega)\|=\|f\,|\,L^{2}(\Omega)\|+\|f\mid L^{1,2}_{A}(\Omega)\|.
\]

These Sobolev spaces are closely connected with quasiconformal mappings.
Recall that a homeomorphism $\varphi: \Omega\to \Omega'$, where $\Omega,\, \Omega'\subset\mathbb C$, is called a $K$-quasiconformal mapping if $\varphi\in W^{1,2}_{\loc}(\Omega)$ and there exists a constant $1\leq K<\infty$ such that
$$
|D\varphi(z)|^2\leq K |J(z,\varphi)|\,\,\text{for almost all}\,\,z\in\Omega.
$$

An important subclass of quasiconformal mappings represent the class of bi-Lipschitz mappings. Note that a homeomorphism $\varphi: \Omega\to \Omega'$ is said to be an $L$-bi-Lipschitz if it satisfies the double inequality
\begin{equation}\label{Bi-Lip}
\frac{1}{L}|z-z'| \leq |\varphi(z)- \varphi(z')| \leq L|z-z'|,
\end{equation}
whenever $z, z' \in \Omega$. The smallest $L \geq 1$ for which \eqref{Bi-Lip} holds is called the isometric distortion of $\varphi$. It is known (see, for example, \cite{VGR}) that each $L$-bi-Lipschitz mapping $\varphi$ is $L^2$-quasiconformal.

Conversely we have the following connection between quasiconformal and bi-Lipschitz mappings:
\begin{lemma}
Let $\varphi: \Omega\to \Omega'$ be a $K$-quasiconformal mapping such that $|J(z, \varphi)|=1$ for almost all $z \in \Omega$. Then $\varphi$ is locally $\sqrt{K}$-bi-Lipschitz a.e. in $\Omega$.
\end{lemma}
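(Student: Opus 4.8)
The plan is to first convert the quasiconformality inequality, together with the normalization $|J(z,\varphi)|=1$, into two-sided pointwise bounds on the differential $D\varphi$, and then to upgrade these infinitesimal bounds to a genuine local bi-Lipschitz estimate by exploiting the absolute continuity of quasiconformal mappings on almost all lines.

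First I would argue at an arbitrary point $z$ at which $\varphi$ is differentiable; recall that a $K$-quasiconformal mapping is differentiable a.e. and satisfies Lusin's condition $(N)$. Let $\sigma_1(z)\ge\sigma_2(z)\ge 0$ be the singular values of $D\varphi(z)$, so that $|D\varphi(z)|=\sigma_1(z)$ (operator norm) and $|J(z,\varphi)|=\sigma_1(z)\,\sigma_2(z)$. Since $|J(z,\varphi)|=1$ forces $\sigma_1(z)>0$, the defining inequality $|D\varphi(z)|^2\le K\,|J(z,\varphi)|$ reads $\sigma_1(z)^2\le K\,\sigma_1(z)\sigma_2(z)$, i.e. $\sigma_1(z)\le K\,\sigma_2(z)$. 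Together with $\sigma_1(z)\sigma_2(z)=1$ this gives
\[
\frac{1}{\sqrt{K}}\le\sigma_2(z)\le\sigma_1(z)\le\sqrt{K},
\]
equivalently
\[
\frac{1}{\sqrt{K}}\,|h|\le|D\varphi(z)\,h|\le\sqrt{K}\,|h|\qquad\text{for all }h\in\mathbb C,
\]
for almost every $z\in\Omega$; in particular $\varphi\in W^{1,\infty}_{\loc}(\Omega)$.

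Next I would globalize the upper bound. Fix $z_0\in\Omega$ and a convex neighbourhood $U\Subset\Omega$ of $z_0$. Since $\varphi$ is ACL and $|D\varphi|\le\sqrt{K}$ a.e., integrating $|D\varphi|$ along almost every segment contained in $U$ (equivalently, using the standard fact that a continuous function lying in $W^{1,\infty}_{\loc}$ is locally Lipschitz with constant $\esssup|D\varphi|$) yields
\[
|\varphi(z)-\varphi(z')|\le\sqrt{K}\,|z-z'|,\qquad z,z'\in U.
\]
For the lower estimate I would apply the same reasoning to $\varphi^{-1}\colon\Omega'\to\Omega$, which is again $K$-quasiconformal; moreover, because $\varphi$ enjoys conditions $(N)$ and $(N^{-1})$ and is a.e. differentiable with $J(z,\varphi)\neq 0$ a.e., one has $D\varphi^{-1}(\varphi(z))=\bigl(D\varphi(z)\bigr)^{-1}$ for a.e.\ $z$, so $|J(w,\varphi^{-1})|=1$ for a.e.\ $w\in\Omega'$. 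Hence $\varphi^{-1}$ is locally $\sqrt{K}$-Lipschitz, which translates into $|\varphi(z)-\varphi(z')|\ge\frac{1}{\sqrt{K}}|z-z'|$ on a neighbourhood of each point. Combining the two local inequalities proves that $\varphi$ is locally $\sqrt{K}$-bi-Lipschitz.

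I expect the main obstacle to be the passage from the a.e. bound $|D\varphi(z)|\le\sqrt{K}$ to the pointwise Lipschitz inequality: this relies on the ACL property of quasiconformal mappings and on Lusin's condition $(N)$, and one must check that the exceptional null set does not spoil the estimate (for instance by noting that $\varphi$ is continuous and that almost every segment avoids the bad set, or by appealing to the Sobolev characterization of locally Lipschitz functions). A secondary point that requires care is justifying the regularity of $\varphi^{-1}$, in particular the identity $|J(w,\varphi^{-1})|=1$ a.e., which is what makes the first part applicable to the inverse map.
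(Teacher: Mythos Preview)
Your argument is correct and follows the same strategy as the paper: bound $|D\varphi|\le\sqrt{K}$ pointwise from $|D\varphi|^2\le K|J(\cdot,\varphi)|$ together with $|J|=1$, and then apply the same reasoning to the inverse $\varphi^{-1}$. Your version is in fact more careful than the paper's, since you justify the passage from the a.e.\ derivative bound to a genuine local Lipschitz estimate (via the ACL/$W^{1,\infty}_{\loc}$ characterization) and you verify $|J(\cdot,\varphi^{-1})|=1$ a.e., both of which the paper leaves implicit.
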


\begin{proof}
Since $\varphi: \Omega\to \Omega'$ is a $K$-quasiconformal mapping then $\varphi$ is differentiable almost everywhere in $\Omega$ and we have
$$
|D\varphi(z)|^2\leq K |J(z,\varphi)|\,\,\,\text{for almost all}\,\,\,z\in\Omega.
$$
Because $|J(z, \varphi)|=1$ a.e. in $\Omega$ we obtain
$$
\lim\limits_{z'\to z}\frac{|\varphi(z)- \varphi(z')|}{|z-z'|}=|D\varphi(z)|\leq \sqrt{K}\,\,\, \text{for almost all}\,\,\, z\in\Omega.
$$
Hence, $\varphi$ is locally $L$-Lipschitz a.e. in $\Omega$ with $L \leq \sqrt{K}$.

On the other hand, it is known that the inverse mapping to $\varphi$ is again $K$-quasiconformal. So, $\varphi^{-1}$ is also locally $L$-Lipschitz a.e. in $\Omega$ with $L \leq \sqrt{K}$. Hence, $\varphi$ is locally $\sqrt{K}$-bi-Lipschitz a.e. in $\Omega$.

\end{proof}

Now we study a connection between composition operators on Sobolev spaces and the $A$-quasiconformal mappings that refine (in the case $n=2$) the corresponding assertion for quasiconformal mappings (\cite{VG75}).

\begin{theorem}\label{IsomSS}
Let $\Omega,\Omega'$ be domains in $\mathbb C$. Then a homeomorphism $\varphi :\Omega \to \Omega'$ is an $A$-quasiconformal mapping
if and only if $\varphi$ induces, by the composition rule $\varphi^{*}(f)=f \circ \varphi$,
an isometry of Sobolev spaces $L^{1,2}_A(\Omega)$ and $L^{1,2}(\Omega')$:
\[
\|\varphi^{*}(f)\,|\,L^{1,2}_A(\Omega)\|=\|f\,|\,L^{1,2}(\Omega')\|
\]
for any $f \in L^{1,2}(\Omega')$.
\end{theorem}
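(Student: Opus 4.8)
The plan is to reduce the whole equivalence to the single pointwise identity
\[
D\varphi(z)\,A(z)\,D\varphi(z)^{T}=|J(z,\varphi)|\,I\qquad\text{for a.e.\ }z\in\Omega,
\]
where $D\varphi$ denotes the Jacobi matrix of $\varphi=\xi+i\eta$, i.e.\ the matrix with rows $\nabla\xi$ and $\nabla\eta$. The bridge between this identity and the Sobolev seminorms is furnished by two facts valid for every quasiconformal homeomorphism $\varphi\colon\Omega\to\Omega'$: the chain rule $\nabla(f\circ\varphi)=D\varphi^{T}\big((\nabla f)\circ\varphi\big)$ (see \cite{GNR18}), and the change of variables formula $\iint\limits_{\Omega'}h(w)\,dudv=\iint\limits_{\Omega}h(\varphi(z))\,|J(z,\varphi)|\,dxdy$; here one also uses that quasiconformal maps lie in $W^{1,2}_{\loc}$, are differentiable a.e., enjoy Luzin's $N$-property, and generate bounded composition operators on $L^{1,2}$, so that $\varphi^{*}f=f\circ\varphi$ is weakly differentiable when $f\in L^{1,2}(\Omega')$. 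Granting all this, for $f\in L^{1,2}(\Omega')$ one has
\[
\|\varphi^{*}(f)\mid L^{1,2}_{A}(\Omega)\|^{2}=\iint\limits_{\Omega}\big\langle D\varphi(z)\,A(z)\,D\varphi(z)^{T}(\nabla f)(\varphi(z)),(\nabla f)(\varphi(z))\big\rangle\,dxdy
\]
and $\|f\mid L^{1,2}(\Omega')\|^{2}=\iint\limits_{\Omega}|(\nabla f)(\varphi(z))|^{2}\,|J(z,\varphi)|\,dxdy$, so the asserted isometry is equivalent to
\[
\iint\limits_{\Omega}\big\langle\big[D\varphi(z)\,A(z)\,D\varphi(z)^{T}-|J(z,\varphi)|\,I\big](\nabla f)(\varphi(z)),(\nabla f)(\varphi(z))\big\rangle\,dxdy=0
\]
for all $f\in L^{1,2}(\Omega')$.

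For the direct implication, an $A$-quasiconformal $\varphi$ is $K$-quasiconformal with $K=(1+\|\mu\|_{\infty})/(1-\|\mu\|_{\infty})$, so the tools above apply, and it remains to verify the matrix identity. Writing $p=\varphi_{z}$, $q=\varphi_{\overline{z}}$ and using the Beltrami equation \eqref{BelEq} in the form $q=\mu p$, one has $\varphi_{x}=p+q$, $\varphi_{y}=i(p-q)$ and $J(z,\varphi)=|p|^{2}-|q|^{2}=|p|^{2}(1-|\mu|^{2})$, and a short calculation yields
\[
D\varphi(z)^{T}D\varphi(z)=|p|^{2}\begin{pmatrix}|1+\mu|^{2}&2\,\Imag\mu\\2\,\Imag\mu&|1-\mu|^{2}\end{pmatrix}.
\]
Dividing by $|J(z,\varphi)|=|p|^{2}(1-|\mu|^{2})$ and comparing with \eqref{Matrix-F} (recalling that $\det A=1$), this reads $|J(z,\varphi)|^{-1}D\varphi^{T}D\varphi=A^{-1}$, i.e.\ $D\varphi\,A\,D\varphi^{T}=|J(z,\varphi)|\,I$ a.e.; then the last displayed integrand vanishes identically, giving the isometry. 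I expect this matrix computation to be the main technical point of the direct part.

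For the converse, assume $\varphi^{*}\colon f\mapsto f\circ\varphi$ is an isometric isomorphism of $L^{1,2}(\Omega')$ onto $L^{1,2}_{A}(\Omega)$. By the uniform ellipticity \eqref{UEC} the seminorms of $L^{1,2}_{A}(\Omega)$ and $L^{1,2}(\Omega)$ are equivalent, hence $\varphi^{*}$ induces an isomorphism of the uniform Sobolev spaces $L^{1,2}(\Omega')$ and $L^{1,2}(\Omega)$; by the functional characterization of quasiconformal mappings in the plane \cite{VG75}, $\varphi$ is quasiconformal. Then the tools above apply, and moreover $\varphi^{*}$ maps onto $L^{1,2}(\Omega)$ (the same characterization applied to $\varphi^{-1}$). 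Let $A_{\varphi}$ be the matrix \eqref{Matrix-F} formed from the complex dilatation $\mu_{\varphi}=\varphi_{\overline{z}}/\varphi_{z}$ of $\varphi$; it is symmetric, has $\det A_{\varphi}=1$, is bounded (since $\|\mu_{\varphi}\|_{\infty}<1$), and by the computation just performed satisfies $D\varphi\,A_{\varphi}\,D\varphi^{T}=|J(z,\varphi)|\,I$ a.e. Consequently $\|\varphi^{*}(f)\mid L^{1,2}_{A_{\varphi}}(\Omega)\|=\|f\mid L^{1,2}(\Omega')\|$ for all $f$ as well; subtracting this from the hypothesis and using surjectivity of $\varphi^{*}$ (so that $g=f\circ\varphi$ runs through all of $L^{1,2}(\Omega)$) yields
\[
\iint\limits_{\Omega}\big\langle\big(A(z)-A_{\varphi}(z)\big)\nabla g(z),\nabla g(z)\big\rangle\,dxdy=0\qquad\text{for all }g\in L^{1,2}(\Omega).
\]
Since $A-A_{\varphi}$ is a bounded symmetric matrix function, a standard localization argument finishes the proof: testing with $g(z)=\varepsilon\,\chi((z-z_{0})/\varepsilon)$, $\chi\in C_{0}^{\infty}$, and letting $\varepsilon\to0$ at a Lebesgue point $z_{0}$ of $A-A_{\varphi}$ reduces this to the constant-coefficient identity $\iint\langle(A(z_{0})-A_{\varphi}(z_{0}))\nabla\chi,\nabla\chi\rangle=0$ for all $\chi$, which (using test functions with separated variables after diagonalizing the matrix) forces $A(z_{0})=A_{\varphi}(z_{0})$. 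Hence $A=A_{\varphi}$ a.e.; since the correspondence $A\leftrightarrow\mu$ given by \eqref{ComDil} and \eqref{Matrix-F} is one-to-one, $\mu_{\varphi}$ equals the complex dilatation \eqref{ComDil} determined by $A$, i.e.\ $\varphi$ solves \eqref{BelEq} with that dilatation, so $\varphi$ is $A$-quasiconformal.

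The main obstacle, apart from the bookkeeping of the two implications, is analytic: justifying the chain rule and the change of variables for a merely $W^{1,2}_{\loc}$-homeomorphism (this is where the a.e.\ differentiability and Luzin's $N$-property of quasiconformal maps enter) and carrying out the localization step that upgrades the integral identity valid for all $f$ to the pointwise equality of the two matrices a.e.; by contrast, the algebraic identification of $|J(z,\varphi)|^{-1}D\varphi^{T}D\varphi$ with $A^{-1}$ through \eqref{Matrix-F} is routine once the complex notation has been set up.
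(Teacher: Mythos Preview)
Your proof is correct and follows essentially the same architecture as the paper's: sufficiency via the chain rule and change of variables, necessity via the uniform ellipticity bound together with \cite{VG75} to first obtain quasiconformality, then comparison of $A$ with the matrix $A_{\varphi}$ built from the actual complex dilatation of $\varphi$. The main cosmetic differences are that you unpack the chain rule into the explicit pointwise identity $D\varphi\,A\,D\varphi^{T}=|J(z,\varphi)|\,I$ (the paper simply cites the chain rule from \cite{VGR}), and that your localization argument for the step $A=A_{\varphi}$ a.e.\ is more detailed than the paper's one-line ``the Hilbert spaces coincide, therefore $A=B$''.
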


\begin{proof}
Sufficiency. We prove that if $\varphi :\Omega \to \Omega'$ is an $A$-quasiconformal mapping
then the composition operator
\[
\varphi^{*}:L^{1,2}(\Omega') \to L^{1,2}_A(\Omega),\,\,\, \varphi^{*}(f)=f \circ \varphi
\]
is an isometry.
Let $f \in L^{1,2}(\Omega')$ be a smooth function. Then the composition $g(z)=f \circ \varphi(z)$
is defined on $\Omega$ and is weakly differentiable almost everywhere in $\Omega$ \cite{VG75}. Let us check that $g(z)=f \circ \varphi(z)$ belongs to the Sobolev space $L^{1,2}_A(\Omega)$. By the chain rule \cite{VGR} we have
\begin{multline*}
\|g\,|\,L^{1,2}_A(\Omega)\| = \left(\iint\limits_{\Omega} \left\langle A(z) \nabla (f \circ \varphi(z)), \nabla (f \circ \varphi(z)) \right\rangle dxdy\right)^{\frac{1}{2}} \\
= \left(\iint\limits_{\Omega} |\nabla f|^{2}(\varphi(z)) |J(z,\varphi)| dxdy\right)^{\frac{1}{2}}
\\
=\left(\iint\limits_{\Omega'} |\nabla f|^{2}(w) dudv\right)^{\frac{1}{2}} =\|f\,|\,L^{1,2}(\Omega')\|.
\end{multline*}

Let $f \in L^{1,2}(\Omega')$ be an arbitrary function. Then there exists a sequence $\{f_k\}$, $k=1,2,...$ of smooth functions such that $f_k \in L^{1,2}(\Omega')$,
$$
\lim\limits_{k\to\infty}\|f-f_k\mid L^{1,2}(\Omega')\|=0
$$
and $\{f_k\}$ converges to $f$ a.e. in $\Omega'$.

Denote by $g_k=f_k\circ\varphi$, $k=1,2,...\,$.
Then
$$
\|g_k-g_l\,|\,L^{1,2}_A(\Omega)\|=\|f_k-f_l\,|\,L^{1,2}(\Omega')\|,\,\,k,l\in\mathbb N,
$$
and because the sequence $\{f_k\}$ converges in $L^{1,2}(\Omega')$ then the sequence $\{g_k\}$ converges in $L^{1,2}_A(\Omega)$.

Note that quasiconformal mappings possess the $N^{-1}$-Luzin property. It means that the preimage of a set of measure zero has measure zero.
So, the sequence $g_k=f_k\circ\varphi$ converges to $g=f\circ\varphi$ a.e. in $\Omega$ and hence in $L^{1,2}_A(\Omega)$.

Therefore
\[
\|\varphi^{*}(f)\,|\,L^{1,2}_A(\Omega)\|=\|f\,|\,L^{1,2}(\Omega')\|
\]
for any $f \in L^{1,2}(\Omega')$.

Necessity. Suppose that the composition operator
\[
\varphi^{*}:L^{1,2}(\Omega') \to L^{1,2}_A(\Omega)
\]
is an isometry, i.e.
\begin{equation}
\label{eqA}
\iint\limits_{\Omega} \left\langle A(z) \nabla (f \circ \varphi(z)), \nabla (f \circ \varphi(z)) \right\rangle dxdy
= \iint\limits_{\Omega'} |\nabla f|^{2}(w) dudv.
\end{equation}

Because the matrix $A$ satisfies the uniform ellipticity condition (\ref{UEC}) then by \eqref{eqA} we have
\begin{multline*}
\frac{1}{K}\iint\limits_{\Omega} |\nabla (f \circ \varphi(z))|^2~dxdy\leq
\iint\limits_{\Omega} \left\langle A(z) \nabla (f \circ \varphi(z)), \nabla (f \circ \varphi(z)) \right\rangle dxdy\\
= \iint\limits_{\Omega'} |\nabla f|^{2}(w) dudv.
\end{multline*}
Hence the following inequality
$$
\left(\iint\limits_{\Omega} |\nabla (f \circ \varphi(z))|^2~dxdy\right)^{\frac{1}{2}}
\leq K^{\frac{1}{2}} \left(\iint\limits_{\Omega'} |\nabla f|^{2}(w)~dudv\right)^{\frac{1}{2}}
$$
holds for any $f\in L^{1,2}(\Omega')$.

So, by \cite{VG75} we can conclude that the mapping $\varphi :\Omega \to \Omega'$ will be a $K$-quasiconformal mapping. Hence, by \cite{Ahl66} $\varphi$ will be a solution of the Beltrami equation
\begin{equation}\label{Beltr}
\varphi_{\overline{z}}(z)=\nu(z)\varphi_{z}(z),\,\,\, \text{a.e. in}\,\,\, \Omega
\end{equation}
with some complex dilatation $\nu(z)$, $|\nu(z)|<1$ a.e. in $\Omega$.

Now we consider the matrix $B$ generated by the complex dilatation $\nu(z)$:
\[
B(z)=\begin{pmatrix} \frac{|1-\nu|^2}{1-|\nu|^2} & \frac{-2 \Imag \nu}{1-|\nu|^2} \\ \frac{-2 \Imag \nu}{1-|\nu|^2} &  \frac{|1+ \nu|^2}{1-|\nu|^2} \end{pmatrix},\,\,\, \text{a.e. in}\,\,\, \Omega.
\]
Then $\varphi$ is a $B$-quasiconformal mapping. Because $\varphi$ defined by (\ref{Beltr}) we have finally
\begin{equation}
\label{eqB}
\iint\limits_{\Omega} \left\langle B(z) \nabla (f \circ \varphi(z)), \nabla (f \circ \varphi(z)) \right\rangle dxdy
= \iint\limits_{\Omega'} |\nabla f|^{2}(w) dudv
\end{equation}
for any $f \in L^{1,2}(\Omega')$.

Now using the equalities (\ref{eqA}) and (\ref{eqB}) we obtain
\[
\iint\limits_{\Omega} \left\langle A(z) \nabla g(z), \nabla g(z) \right\rangle dxdy =
\iint\limits_{\Omega} \left\langle B(z) \nabla g(z), \nabla g(z) \right\rangle dxdy
\]
for any $g \in L^{1,2}_{A}(\Omega)$.
It  means that Hilbert spaces $W^{1,2}_A(\Omega)$ and $W^{1,2}_B(\Omega)$ coincide. Therefore $A=B$ and $\mu=\nu$ a.e. in $\Omega$.
\end{proof}

Next, we set the following property for $A$-quasiconformal mappings.
\begin{lemma}
Let $\varphi : \Omega \to \mathbb D$ be an $A$-quasiconformal mapping. Then the inverse mapping $\psi=\varphi^{-1} : \mathbb D \to \Omega$ is $A^{-1}$-quasiconformal.
\end{lemma}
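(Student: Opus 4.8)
The plan is to use Theorem~\ref{IsomSS} as a black box in both directions. Since $\varphi:\Omega\to\mathbb D$ is $A$-quasiconformal, Theorem~\ref{IsomSS} tells us that $\varphi^{*}:L^{1,2}(\mathbb D)\to L^{1,2}_A(\Omega)$ is an isometry, i.e.
\[
\iint\limits_{\Omega}\left\langle A(z)\nabla(f\circ\varphi)(z),\nabla(f\circ\varphi)(z)\right\rangle\,dxdy=\iint\limits_{\mathbb D}|\nabla f|^2(w)\,dudv
\]
for every $f\in L^{1,2}(\mathbb D)$. I would then make the substitution $w=\varphi(z)$, equivalently $z=\psi(w)$, and rewrite both integrals as integrals over $\mathbb D$ using the change-of-variables formula for quasiconformal mappings (they satisfy Luzin's $N$ and $N^{-1}$ properties, so this is legitimate). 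Writing $g=f\circ\varphi\in L^{1,2}_A(\Omega)$ so that $f=g\circ\psi$, the left side becomes $\iint_{\mathbb D}\left\langle A(\psi(w))\,(D\psi(w))^{T}\nabla g(\psi(w))\cdot(\cdots)\right\rangle|J(w,\psi)|\,dudv$ after applying the chain rule $\nabla f(w)=(D\psi(w))^{T}\nabla g(\psi(w))$, and the right side becomes $\iint_{\mathbb D}|\nabla g|^2(\psi(w))\,|J(w,\psi)|\,dudv$. (Alternatively, and more cleanly, I would simply transport the identity directly: for arbitrary $h\in L^{1,2}(\Omega)$ put $f=h\circ\psi\in L^{1,2}(\mathbb D)$, apply the displayed isometry, and change variables back.)

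Carrying this out, the identity to be verified is that for all $h\in L^{1,2}(\Omega)$,
\[
\iint\limits_{\mathbb D}|\nabla h|^2(\psi(w))\,|J(w,\psi)|\,dudv=\iint\limits_{\mathbb D}\left\langle A^{-1}(\psi(w))\,\nabla h(\psi(w)),\nabla h(\psi(w))\right\rangle\cdot(\text{something})\,dudv,
\]
which, after undoing the substitution $z=\psi(w)$, is exactly
\[
\iint\limits_{\Omega}|\nabla h|^2(z)\,dxdy=\iint\limits_{\Omega}\left\langle A^{-1}(z)\nabla(h\circ\psi^{-1}\circ\varphi)(z),\cdots\right\rangle\,dxdy,
\]
i.e.\ the statement that $\psi^{*}:L^{1,2}(\Omega)\to L^{1,2}_{A^{-1}}(\mathbb D)$ is an isometry. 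By the \emph{necessity} direction of Theorem~\ref{IsomSS} applied with the roles of the domains reversed, establishing this isometry is equivalent to showing that $\psi$ is an $A^{-1}$-quasiconformal mapping — which is precisely the claim. So the real content is the pointwise linear-algebra identity relating the matrix generated by the complex dilatation $\mu_\varphi$ of $\varphi$ to the matrix generated by the complex dilatation $\mu_\psi$ of $\psi=\varphi^{-1}$.

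Concretely, I would invoke the classical formula for the dilatation of an inverse quasiconformal map,
\[
\mu_\psi(w)=-\mu_\varphi(\psi(w))\,\frac{\overline{\varphi_z(\psi(w))}}{\varphi_z(\psi(w))},
\]
so that $|\mu_\psi(w)|=|\mu_\varphi(\psi(w))|$, and then plug this into the representation \eqref{Matrix-F} to compute the matrix $B$ generated by $\mu_\psi$. The expected outcome is $B(w)=A^{-1}(\psi(w))$; since $A\in M^{2\times2}$ has $\det A=1$, $A^{-1}$ is again a symmetric matrix of determinant $1$ satisfying the same ellipticity constant, so it lies in $M^{2\times2}(\mathbb D)$ and the statement makes sense. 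A slicker route that avoids the rotation factor in $\mu_\psi$ altogether: use the functional characterization instead. Start from the isometry $\iint_\Omega\langle A\nabla(f\circ\varphi),\nabla(f\circ\varphi)\rangle=\iint_{\mathbb D}|\nabla f|^2$, change variables $z=\psi(w)$ on the left using $J(z,\varphi)\,J(\varphi(z),\psi)=1$, and read off directly that $\iint_{\mathbb D}\langle A^{-1}(\psi(w))\nabla(f)(w),\nabla(f)(w)\rangle\,dudv=\iint_\Omega|\nabla(f\circ\varphi)|^2\,dz$ after simplification using $A(\psi(w))^{-1}=|J(w,\psi)|^{-1}\cdot(\text{cofactor matrix pulled through }D\psi)$; then Theorem~\ref{IsomSS} (sufficiency$+$necessity) forces $\psi$ to be $A^{-1}$-quasiconformal.

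The main obstacle is the bookkeeping in the change-of-variables step: one must correctly transport the quadratic form $\langle A(z)\nabla g(z),\nabla g(z)\rangle$ through the nonlinear substitution $z=\psi(w)$, tracking both the Jacobian weight $|J(w,\psi)|$ and the way the matrix $D\psi(w)$ conjugates $A$, and then recognize the resulting matrix as $A^{-1}(\psi(w))$ — this last identification is where the hypothesis $\det A=1$ (equivalently the precise form \eqref{Matrix-F} with $1-|\mu|^2$ in the denominator) is used essentially. Everything else (the convergence/density argument for non-smooth $f$, Luzin $N$ and $N^{-1}$) is routine and is already carried out in the proof of Theorem~\ref{IsomSS}, so I would just cite it rather than repeat it.
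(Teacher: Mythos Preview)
The paper does not go through Theorem~\ref{IsomSS}; it takes your secondary route directly. It writes $\mu_{\varphi^{-1}}(w)=-\nu_\varphi(\varphi^{-1}(w))$ with $\nu_\varphi=(\varphi_z/|\varphi_z|)^2\mu_\varphi$, feeds this into \eqref{Matrix-F} to produce the matrix $B$ attached to $\psi$, and then argues from $|\mu_{\varphi^{-1}}|=|\mu_\varphi|$ together with a claimed relation $\Imag\mu_\varphi=-\Imag(\nu_\varphi\circ\varphi^{-1})$ that $A(z)B(\varphi(z))=I$. So if you want to mirror the paper you should commit to the dilatation computation and actually carry it out, rather than merely announcing the ``expected outcome $B(w)=A^{-1}(\psi(w))$''.

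Your primary (functional) route has a genuine gap, and it is the same gap that is hidden in the paper's last step. Substituting $z=\psi(w)$ on the left of $\iint_\Omega\langle A\nabla(f\circ\varphi),\nabla(f\circ\varphi)\rangle=\iint_{\mathbb D}|\nabla f|^2$ and using $\nabla(f\circ\varphi)(z)=(D\varphi(z))^T\nabla f(\varphi(z))$ only reproduces the pointwise identity $D\varphi\,A\,(D\varphi)^T=|J_\varphi|I$; it does not deliver the displayed $A^{-1}$-identity. What you would actually need for $\psi^*:L^{1,2}(\Omega)\to L^{1,2}_{A^{-1}}(\mathbb D)$ to be an isometry is $D\psi\,(A^{-1}\!\circ\psi)\,(D\psi)^T=|J_\psi|I$. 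Since the first identity gives $A^{-1}=|J_\psi|\,(D\varphi)^TD\varphi$, while the matrix that \eqref{Matrix-F} attaches to $\psi$ is $|J_\psi|\,D\varphi(D\varphi)^T$, equality would force $D\varphi$ to be a normal matrix, which is not automatic for a quasiconformal map. Concretely, for $\varphi(z)=iz+\tfrac12\bar z$ one computes $\mu_\varphi=\mu_{\varphi^{-1}}=-\tfrac{i}{2}$, hence $B=A\neq A^{-1}$. The rotation factor $e^{2i\arg\varphi_z}$ in $\nu_\varphi$ is exactly what your ``cofactor pulled through $D\psi$'' handwave drops, and it is also what breaks the paper's implication from $|\mu_\varphi|=|\mu_\psi|$ and $\Imag\mu_\varphi=-\Imag\nu_\varphi$ to $AB=I$. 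So neither your sketch nor the paper's argument controls this rotation, and the identity $B=A^{-1}\circ\psi$ cannot be established without an additional normalization of $\varphi$.
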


\begin{proof}
Let $\varphi : \Omega \to \mathbb D$ be an $A$-quasiconformal mapping with the matrix $A$ defined by the formula \eqref{Matrix-F}, i.e.
$$
A(z)= \begin{pmatrix} \frac{|1-\mu(z)|^2}{1-|\mu(z)|^2} & \frac{-2 \Imag \mu(z)}{1-|\mu(z)|^2} \\ \frac{-2 \Imag \mu(z)}{1-|\mu(z)|^2} &  \frac{|1+\mu(z)|^2}{1-|\mu(z)|^2} \end{pmatrix},\,\,\, \text{a.e. in}\,\,\, \Omega.
$$

By \cite{Ahl66} it is known that the complex dilatation for the inverse mapping $\varphi^{-1} : \mathbb D \to \Omega$ satisfies
\[
\mu_{\varphi^{-1}}(w)=-\nu_{\varphi} \circ \varphi^{-1}(w)\,\,\,\text{for almost all}\,\,\,w\in \mathbb D,
\]
where
\[
\nu_{\varphi}=\frac{\varphi_{\overline{z}}}{\overline{\varphi_z}}=\left(\frac{\varphi_z}{|\varphi_z|}\right)^2\mu_{\varphi},\,\,\, \text{a.e. in}\,\,\, \Omega
\]
is called the second complex dilatation of $\varphi$.

Hence, the matrix $B$ induces by the complex dilatation $\mu_{\varphi^{-1}}$ of the inverse mapping $\varphi^{-1}$ has the form
$$
B(w)= \begin{pmatrix} \frac{|1+\nu_{\varphi} \circ \varphi^{-1}|^2}{1-|\nu_{\varphi} \circ \varphi^{-1}|^2} & \frac{2 \Imag (\nu_{\varphi} \circ \varphi^{-1})}{1-|\nu_{\varphi} \circ \varphi^{-1}|^2} \\ \frac{2 \Imag (\nu_{\varphi} \circ \varphi^{-1})}{1-|\nu_{\varphi} \circ \varphi^{-1}|^2} &  \frac{|1-\nu_{\varphi} \circ \varphi^{-1}|^2}{1-|\nu_{\varphi} \circ \varphi^{-1}|^2} \end{pmatrix},
\,\,\,\text{a.e. in}\,\,\,\mathbb D.
$$

Because $\det B=1$, $|\mu_{\varphi}(z)|=|\mu_{\varphi^{-1}}(w)|$, $\Imag \mu_{\varphi} = -\Imag (\nu_{\varphi} \circ \varphi^{-1})$ for almost all $z\in \Omega$ and almost all $w=\varphi(z)\in \mathbb D$ we have
$$
A(z)B(\varphi(z))=I\,\,\,\text{for almost all}\,\,\, z\in \Omega.
$$

Therefore we conclude that $B(w)=A^{-1}(\varphi^{-1}(w))$ for almost all $w\in \mathbb D$ and $A^{-1}(z)=B(\varphi(z))$ for almost all $z\in \Omega$.
	
\end{proof}

\section{ Weighted Sobolev-Poincar\'e inequalities}

Denote by $B_{r,2}(\mathbb D)$, $1<r<\infty$,  the best constant in the (non-weighted) Sobolev-Poincar\'e inequality in the unit disc $\mathbb D$. Exact calculations of $B_{r,2}(\mathbb D)$, $r\ne 2$, is an open problem and we use the upper estimate (see, for example, \cite{GT77,GU16}):
$$
B_{r,2}(\mathbb D) \leq \left(2^{-1} \pi\right)^{\frac{2-r}{2r}}\left(r+2\right)^{\frac{r+2}{2r}}.
$$

On the basis of Theorem~\ref{IsomSS} we prove an universal weighted Sobolev-Poincar\'e inequality which holds in any simply connected planar domain with non-empty boundary. Denote by $h(z) =|J(z,\varphi)|$  the quasihyperbolic weight defined by an $A$-quasiconformal mapping $\varphi : \Omega \to \mathbb D$.

\begin{theorem}\label{Th4.1}
Let $A$ belongs to a class  $M^{2 \times 2}(\Omega)$  and $\Omega$ be a simply connected planar domain.
Then for any function $f \in W^{1,2}_{A}(\Omega)$ the following weighted Sobolev-Poincar\'e inequality
\[
\inf\limits_{c \in \mathbb R}\left(\iint\limits_\Omega |f(z)-c|^rh(z)dxdy\right)^{\frac{1}{r}} \leq B_{r,2}(h,A,\Omega)
\left(\iint\limits_\Omega \left\langle A(z) \nabla f(z), \nabla f(z) \right\rangle dxdy\right)^{\frac{1}{2}}
\]
holds for any $r \geq 1$ with the constant $B_{r,2}(h,A,\Omega) = B_{r,2}(\mathbb D)$.
\end{theorem}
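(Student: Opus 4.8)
The plan is to transplant the inequality to the unit disc by means of an $A$-quasiconformal mapping $\varphi:\Omega\to\mathbb D$ and to reduce it to the classical unweighted Poincar\'e--Sobolev inequality there. First I would fix such a $\varphi$: it exists because $\Omega$ is simply connected with non-empty boundary (compose an $A$-quasiconformal homeomorphism $\Omega\to\varphi(\Omega)$ with a Riemann conformal map $\varphi(\Omega)\to\mathbb D$, as recalled in the Introduction), and recall that $h(z)=|J(z,\varphi)|$. Given $f\in W^{1,2}_A(\Omega)$, set $g=f\circ\varphi^{-1}$ on $\mathbb D$. Since $\varphi^{-1}:\mathbb D\to\Omega$ is quasiconformal (indeed $A^{-1}$-quasiconformal, by the preceding lemma) and $L^{1,2}_A(\Omega)$ coincides with $L^{1,2}(\Omega)$ as a set by the uniform ellipticity condition~\eqref{UEC}, the classical composition theory \cite{VG75} gives $g\in L^{1,2}(\mathbb D)=W^{1,2}(\mathbb D)$. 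Writing $f=\varphi^{*}(g)$ and applying Theorem~\ref{IsomSS} we obtain
\[
\iint\limits_\Omega\left\langle A(z)\nabla f(z),\nabla f(z)\right\rangle dxdy=\iint\limits_{\mathbb D}|\nabla g(w)|^2\,dudv .
\]

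Next I would handle the left-hand side by the change of variables $w=\varphi(z)$. Quasiconformal mappings possess the Luzin $N$ and $N^{-1}$ properties and satisfy the change-of-variables formula, so for every fixed $c\in\mathbb R$
\[
\iint\limits_\Omega|f(z)-c|^r h(z)\,dxdy=\iint\limits_\Omega|g(\varphi(z))-c|^r|J(z,\varphi)|\,dxdy=\iint\limits_{\mathbb D}|g(w)-c|^r\,dudv .
\]
Taking the infimum over $c\in\mathbb R$ and combining with the previous display, the asserted estimate for $f$ on $\Omega$ is exactly the unweighted Poincar\'e--Sobolev inequality
\[
\inf\limits_{c\in\mathbb R}\left(\iint\limits_{\mathbb D}|g(w)-c|^r\,dudv\right)^{\frac{1}{r}}\leq B_{r,2}(\mathbb D)\left(\iint\limits_{\mathbb D}|\nabla g(w)|^2\,dudv\right)^{\frac{1}{2}}
\]
for $g\in W^{1,2}(\mathbb D)$, which holds with the best constant $B_{r,2}(\mathbb D)$; hence $B_{r,2}(h,A,\Omega)=B_{r,2}(\mathbb D)$.

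The point needing the most care is the regularity bookkeeping for a general $f\in W^{1,2}_A(\Omega)$ rather than a smooth one: one must justify both that $g=f\circ\varphi^{-1}$ genuinely lies in $W^{1,2}(\mathbb D)$, so that the disc inequality is applicable, and that the change-of-variables identity for the $L^r$-term survives the passage from smooth functions. Both are handled just as in the proof of Theorem~\ref{IsomSS}: approximate $f$ (equivalently $g$) by smooth functions, use that composition with $\varphi$ preserves a.e.\ convergence via the $N^{-1}$-property and acts isometrically onto $L^{1,2}_A(\Omega)$, and pass to the limit; the $L^r$ identity then follows from Fatou's lemma together with the reverse inequality obtained by applying the same argument to $\varphi^{-1}$. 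I would also remark that $\iint\limits_\Omega h\,dxdy=\iint\limits_{\mathbb D}dudv=\pi<\infty$, so that on both sides the infimum is taken over a coercive convex function of $c$ and is attained; the reduction is therefore completely symmetric and no measure-finiteness hypothesis on $\Omega$ itself is required.
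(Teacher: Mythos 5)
Your proof is correct and follows the paper's argument essentially verbatim: fix the $A$-quasiconformal mapping $\varphi:\Omega\to\mathbb D$, apply Theorem~\ref{IsomSS} to equate the $A$-energy on $\Omega$ with the Dirichlet energy of $g=f\circ\varphi^{-1}$ on $\mathbb D$, transport the weighted $L^r$ term by the quasiconformal change-of-variables formula (using the Luzin $N$ and $N^{-1}$ properties), invoke the classical Poincar\'e--Sobolev inequality on $\mathbb D$, and conclude by density of smooth functions. The extra care you devote to the approximation step (Fatou plus the symmetric argument via $\varphi^{-1}$) and the observation that $\iint_\Omega h\,dxdy=\pi<\infty$ are harmless supplements the paper leaves implicit.
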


\begin{proof}
Because $\Omega$ is a simply connected planar domain, then there exists \cite{Ahl66} an $\mu$-quasiconformal homeomorphism $\varphi : \Omega \to \mathbb D$ with
\begin{equation*}
\mu(z)=\frac{a_{22}(z)-a_{11}(z)-2ia_{12}(z)}{\det(I+A(z))},
\end{equation*}
which is an $A$-quasiconformal mapping.

Hence by Theorem~\ref{IsomSS} the equality
\begin{equation}\label{IN2.1}
||f \circ \varphi^{-1} \,|\, L^{1,2}(\mathbb D)|| = ||f \,|\, L^{1,2}_{A}(\Omega)||
\end{equation}
holds for any function $f \in L^{1,2}_{A}(\Omega)$.

Denote by $h(z):=|J(z,\varphi)|$ the quasihyperbolic weight in $\Omega$.
Now using the change of variable formula for the quasiconformal mappings \cite{VGR}, the equality \eqref{IN2.1} and the classical Sobolev-Poincar\'e inequality for the unit disc $\mathbb D$ \cite{M}
\begin{equation*}\label{IN2.3}
\inf\limits_{c \in \mathbb R}\left(\iint\limits_{\mathbb D} |f \circ \varphi^{-1}(w)-c|^rdudv\right)^{\frac{1}{r}} \\
\leq B_{r,2}(\mathbb D)
\left(\iint\limits_{\mathbb D} \nabla (f \circ \varphi^{-1}(w))dudv\right)^{\frac{1}{2}}
\end{equation*}
that holds for any $r \geq 1$, we obtain that for any smooth function $f\in L^{1,2}_{A}(\Omega)$

\begin{multline*}
\inf\limits_{c \in \mathbb R}\left(\iint\limits_\Omega |f(z)-c|^rh(z)dxdy\right)^{\frac{1}{r}}
= \inf\limits_{c \in \mathbb R}\left(\iint\limits_\Omega |f(z)-c|^r |J(z,\varphi)| dxdy\right)^{\frac{1}{r}} \\
= \inf\limits_{c \in \mathbb R}\left(\iint\limits_{\mathbb D} |f \circ \varphi^{-1}(w)-c|^rdudv\right)^{\frac{1}{r}}
\leq B_{r,2}(\mathbb D)
\left(\iint\limits_{\mathbb D} \nabla (f \circ \varphi^{-1}(w))dudv\right)^{\frac{1}{2}} \\
= B_{r,2}(\mathbb D)
\left(\iint\limits_{\Omega} \left\langle A(z) \nabla g(z), \nabla f(z) \right\rangle dxdy\right)^{\frac{1}{2}}.
\end{multline*}

Approximating an arbitrary function $f \in W^{1,2}_{A}(\Omega)$ by smooth functions we obtain finally
$$
\inf\limits_{c \in \mathbb R}\left(\iint\limits_\Omega |f(z)-c|^rh(z)dxdy\right)^{\frac{1}{r}} \leq
B_{r,2}(h,A,\Omega) \left(\iint\limits_{\Omega} \left\langle A(z) \nabla f(z), \nabla f(z) \right\rangle dxdy\right)^{\frac{1}{2}},
$$
with the constant
$$
B_{r,2}(h,A,\Omega)=B_{r,2}(\mathbb D)  \leq \left(2^{-1} \pi\right)^{\frac{2-r}{2r}}\left(r+2\right)^{\frac{r+2}{2r}}.
$$

\end{proof}

\section{ Estimates of Sobolev-Poincar\'e constants}

In this section we consider (sharp) upper estimates of Sobolev-Poincar\'e constants in domains that satisfy the quasihyperbolic boundary condition.
Recall that a domain $\Omega$ satisfy the $\gamma$-quasihyperbolic boundary condition \cite{KOT01,KOT02} with some $\gamma>0$, if the growth condition on the quasihyperbolic metric
$$
k_{\Omega}(x_0,x)\leq \frac{1}{\gamma}\log\frac{\dist(x_0,\partial\Omega)}{\dist(x,\partial\Omega)}+C_0
$$
is satisfied for all $x\in\Omega$, where $x_0\in\Omega$ is a fixed base point and $C_0=C_0(x_0)<\infty$.

This quasihyperbolic boundary condition is equivalent to integrability of Jacobians of corresponding quasiconformal mappings with some exponent $\beta>1$. Let us reformulate a theorem about integrability of the Jacobians from \cite{AK} in the convenient for our study form. Firstly, recall that for quasiconformal mappings $\psi:\mathbb D\to\Omega$
the volume derivative
$$
J_{\psi}(w):=\lim\limits_{r\to 0}\frac{|\psi(B(w,r))|}{|B(w,r)|}=|J(w, \psi)|
$$
is defined for almost all $w\in\mathbb D$.

\begin{theorem} \cite{AK} Let $\psi: \mathbb{D} \to \Omega$ be a quasiconformal mapping. Then $J_{\psi} \in L^{\beta}(\mathbb{D})$ for some $\beta>1$ if and only if $\Omega$ satisfy to a $\gamma$-quasihyperbolic boundary conditions for some $\gamma$.
\end{theorem}

Let us remark that the degree of integrability $\beta$ depends only on
$\Omega$ and the quasiconformility coefficient $K(\psi)$.

Our main goal is a reduction of weighted Sobolev-Poincar\'e inequalities to non-weighted embedding theorems. This goal requires the exact value of integrability exponent of Jacobians. It leads us to a following new definition.

Namely, we say that a simply connected domain $\Omega \subset \mathbb C$ is called an $A$-quasiconformal $\beta$-regular domain, $\beta >1$, if
$$
\iint\limits_\mathbb D |J(w, \varphi^{-1})|^{\beta}~dudv < \infty,
$$
where $\varphi: \Omega\to\mathbb D$ is a corresponding $A$-quasiconformal mapping.

Since (see, for example, \cite{Ahl66}) $A$-quasiconformal mappings $\varphi: \Omega\to\mathbb D$ are defined up to conformal automorphisms of $\mathbb D$, a property of quasiconformal $\beta$-regularity doesn't depend on a choice of $\varphi$ and depends on the "quasihyperbolic geometry"  of $\Omega$ only.

Of course, any quasiconformal $\beta$-regular domain satisfies to some $\gamma$-quasihyperbolic boundary conditions and the class of all quasiconformal regular domains coincide with the class of all domains satisfying to the quasihyperbolic boundary conditions.

In \cite{GU14} it was proved that if $\Omega \subset \mathbb C$ is an $A$-quasiconformal $\beta$-regular domain, $\beta >1$, then $\Omega$ has a finite geodesic diameter. Hence, "maze-like" domains \cite{GPU18,KOT02} are not  $A$-quasiconformal $\beta$-regular domains.

Ahlfors domains \cite{Ahl66} (quasidiscs \cite{VGR}) represent an important subclass of $A$-quasi\-con\-for\-mal $\beta$-regular domains. Moreover, in these domains spectral estimates can be specified in terms of the "quasiconformal geometry" of domains (Section~6).


The following theorem represents  a Sobolev type embedding theorem with estimates of the norm of the embedding operator in quasiconformal regular domains.

\begin{theorem}\label{Th4.3}
Let $A$ belongs to a class  $M^{2 \times 2}(\Omega)$ and $\Omega$ be an $A$-quasiconformal $\beta$-regular domain. Then:
\begin{enumerate}
\item the embedding operator
\[
i_{\Omega}:W^{1,2}_{A}(\Omega) \hookrightarrow L^s(\Omega)
\]
is compact for any $s \geq 1$;
\item for any function $f \in W^{1,2}_{A}(\Omega)$ and for any $s \geq 1$, the Sobolev-Poincar\'e inequality
\[
\inf\limits_{c \in \mathbb R}\|f-c\mid L^s(\Omega)\| \leq B_{s,2}(A,\Omega)
\|f\mid L^{1,2}_{A}(\Omega)\|
\]
holds with the constant
$$
B_{s,2}(A,\Omega) \leq B_{\frac{\beta s}{\beta-1},2}(\mathbb D) \|J_{\varphi^{-1}}\mid L^{\beta}(\mathbb D)\|^{\frac{1}{s}},
$$
where $J_{\varphi^{-1}}$ is a Jacobian of the quasiconformal mapping $\varphi^{-1}:\mathbb D\to\Omega$.
\end{enumerate}
\end{theorem}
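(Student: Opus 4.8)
The plan is to transplant the whole problem to the unit disc $\mathbb D$ by means of the $A$-quasiconformal mapping $\varphi:\Omega\to\mathbb D$ (which exists because $\Omega$ is simply connected, by the measurable Riemann mapping theorem) and then to exploit two facts: the isometry $\varphi^{*}:L^{1,2}(\mathbb D)\to L^{1,2}_A(\Omega)$ furnished by Theorem~\ref{IsomSS}, and the integrability $\iint_{\mathbb D}|J(w,\varphi^{-1})|^{\beta}\,dudv<\infty$ coming from $A$-quasiconformal $\beta$-regularity. Note in passing that this integrability together with H\"older's inequality on the disc forces $|\Omega|=\iint_{\mathbb D}|J(w,\varphi^{-1})|\,dudv<\infty$, which will be used below.

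I would prove assertion (2) first. Given $f\in W^{1,2}_A(\Omega)$, set $g=f\circ\varphi^{-1}$; by Theorem~\ref{IsomSS}, $g\in L^{1,2}(\mathbb D)$ and $\|\nabla g\mid L^2(\mathbb D)\|=\|f\mid L^{1,2}_A(\Omega)\|$. For a constant $c$, the change-of-variables formula for quasiconformal mappings gives
\[
\iint_\Omega|f(z)-c|^s\,dxdy=\iint_{\mathbb D}|g(w)-c|^s\,|J(w,\varphi^{-1})|\,dudv .
\]
Applying H\"older's inequality with exponents $\beta$ and $\beta/(\beta-1)$, taking $s$-th roots, and then invoking the classical Poincar\'e--Sobolev inequality in $\mathbb D$ with exponent $r=\beta s/(\beta-1)$ together with the isometry, one obtains for every $c$
\[
\|f-c\mid L^s(\Omega)\|\le \|g-c\mid L^{\frac{\beta s}{\beta-1}}(\mathbb D)\|\,\|J_{\varphi^{-1}}\mid L^{\beta}(\mathbb D)\|^{\frac1s},
\]
and taking the infimum over $c$ (constants pull back to constants) yields
\[
\inf_{c\in\mathbb R}\|f-c\mid L^s(\Omega)\|
\le B_{\frac{\beta s}{\beta-1},2}(\mathbb D)\,\|J_{\varphi^{-1}}\mid L^{\beta}(\mathbb D)\|^{\frac1s}\,\|f\mid L^{1,2}_A(\Omega)\| ,
\]
which is the claimed estimate with the stated constant. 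As in the proofs of Theorems~\ref{IsomSS} and \ref{Th4.1}, one first carries this out for smooth $f$ and then passes to the limit.

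For assertion (1) I would argue by sequential compactness. Let $\{f_k\}$ be bounded in $W^{1,2}_A(\Omega)$ and put $g_k=f_k\circ\varphi^{-1}$. By the isometry the gradients $\nabla g_k$ are bounded in $L^2(\mathbb D)$; to gain control of $g_k$ in $L^2(\mathbb D)$ I would apply Theorem~\ref{Th4.1} with $r=2$, using that $\|f_k-c\mid L^2(\Omega,h)\|^2=\iint_{\mathbb D}|g_k-c|^2\,dudv$, which produces constants $c_k$ with $\{g_k-c_k\}$ bounded in $W^{1,2}(\mathbb D)$. By the Rellich--Kondrachov theorem (the embedding $W^{1,2}(\mathbb D)\hookrightarrow L^{q}(\mathbb D)$ is compact for every $1\le q<\infty$) a subsequence of $\{g_k-c_k\}$ converges in $L^{q}(\mathbb D)$ with $q=\beta s/(\beta-1)$; applying the H\"older estimate of the previous paragraph to $g_k-c_k$ (in place of $g-c$) shows that the corresponding subsequence of $\{f_k-c_k\}$ is Cauchy in $L^s(\Omega)$. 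Finally, since $|\Omega|<\infty$ and $\{f_k\}$ is bounded in $L^2(\Omega)$, the numbers $c_k$ stay bounded, so after a further subsequence $c_k\to c_0$ and $f_k$ converges in $L^s(\Omega)$, proving compactness of $i_\Omega$.

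The main obstacle is the absence of an a priori $L^2(\mathbb D)$ bound on the transplanted functions $g_k$: Theorem~\ref{IsomSS} controls only gradients, so one must renormalize by suitable additive constants via the weighted Poincar\'e inequality of Theorem~\ref{Th4.1} before Rellich--Kondrachov becomes applicable, and then verify that these constants remain bounded (this is where finiteness of $|\Omega|$ enters). A secondary technical point is the justification of the change-of-variables formula and of smooth approximation, which rest on the Luzin $N$- and $N^{-1}$-properties of quasiconformal mappings already used in the proof of Theorem~\ref{IsomSS}.
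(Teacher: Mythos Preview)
Your argument is correct. For part~(2) your route is identical to the paper's: the inequality~\eqref{Weight} quoted from \cite{GPU19} is precisely your H\"older step combined with the change of variables, and the remainder is Theorem~\ref{Th4.1} plus the isometry of Theorem~\ref{IsomSS}, exactly as you wrote.

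For part~(1) the paper takes a cleaner operator-theoretic line: it factors the embedding $i_\Omega$ as the composition
\[
W^{1,2}_A(\Omega)\xrightarrow{(\varphi^{-1})^{*}} W^{1,2}(\mathbb D)\xrightarrow{i_{\mathbb D}} L^{r}(\mathbb D)\xrightarrow{\varphi^{*}} L^{s}(\Omega),\qquad r=\tfrac{\beta s}{\beta-1},
\]
where the middle arrow is compact (Rellich--Kondrachov) and the outer arrows are bounded; boundedness of $(\varphi^{-1})^{*}$ on the \emph{normed} space $W^{1,2}_A(\Omega)$ is not proved from scratch but imported from \cite{GPU19}. Your sequential argument unfolds exactly the content hidden in that citation: the renormalization $g_k\mapsto g_k-c_k$ via Theorem~\ref{Th4.1} with $r=2$ is what produces the missing $L^2(\mathbb D)$ bound, and your final step showing $\{c_k\}$ bounded (using $|\Omega|<\infty$ and $L^2(\Omega)$-boundedness of $f_k$; note both $L^2(\Omega)$ and $L^s(\Omega)$ embed in $L^1(\Omega)$ so the triangle inequality in $L^1$ controls $|c_k|$) is the elementary substitute for the black-box boundedness of $(\varphi^{-1})^{*}$ on normed spaces. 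The paper's presentation is shorter and more conceptual; yours is more self-contained and avoids the external reference.
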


\begin{proof}
Let $s \geq 1$. Since $\Omega$ is an $A$-quasiconformal $\beta$-regular domain,
then there exists an $A$-quasiconformal mapping $\varphi: \Omega \to \mathbb D$ satisfies the condition of $\beta$-regularity:
\[
\iint\limits_\mathbb D \big|J(w,\varphi^{-1})\big|^{\beta}~dudv  < \infty.
\]
Hence \cite{VU04} the composition operator for Lebesgue spaces
\[
\varphi^{*}:L^r(\mathbb D) \to L^s(\Omega)
\]
is bounded for $r/(r-s)=\beta$ i.e. for $r=\beta s/(\beta -1)$.

By Theorem~\ref{IsomSS} $A$-quasiconformal mappings $\varphi: \Omega \to \mathbb D$ generate a bounded composition operator on seminormed Sobolev spaces
\[
(\varphi^{-1})^{*}:L^{1,2}_{A}(\Omega) \to L^{1,2}(\mathbb D).
\]
Because the matrix $A$ satisfies to the uniform ellipticity condition~\eqref{UEC} then the norm of Sobolev space $W^{1,2}_{A}(\Omega)$ is equivalent to the norm of Sobolev space $W^{1,2}(\Omega)$ and by \cite{GPU19} we obtain that the composition operator on normed Sobolev spaces
\[
(\varphi^{-1})^{*}:W^{1,2}_{A}(\Omega) \to W^{1,2}(\mathbb D),\,\,\, (\varphi^{-1})^{*}(f)=f \circ \varphi^{-1} ,
\]
is bounded.

Therefore according to the "transfer" diagram \cite{GG94} we obtain that the embedding operator
\[
i_{\Omega}:W^{1,2}_{A}(\Omega) \hookrightarrow L^s(\Omega)
\]
is compact as a composition of three operators: the bounded composition operator on Sobolev spaces
$(\varphi^{-1})^{*}:W^{1,2}_{A}(\Omega) \to W^{1,2}(\mathbb D)$, the compact embedding operator
\[
i_{\mathbb D}:W^{1,2}(\mathbb D) \hookrightarrow L^r(\mathbb D)
\]
and the bounded composition operator on Lebesgue spaces $\varphi^{*}:L^r(\mathbb D) \to L^s(\Omega)$.

Let $s=\frac{\beta -1}{\beta}r$ then by \cite{GPU19} the inequality
\begin{equation}
\label{Weight}
||f\,|\,L^s(\Omega)|| \leq \left(\iint\limits_\mathbb D \big|J(w,\varphi^{-1})\big|^{\beta}~dudv \right)^{{\frac{1}{\beta}} \cdot \frac{1}{s}} ||f\,|\,L^r(\Omega,h)||
\end{equation}
holds for any function $f\in  L^{r}(\Omega,h)$.

Using Theorem \ref{Th4.1} and inequality~\eqref{Weight} we have
\begin{multline*}
\inf_{c \in \mathbb R} \left(\iint\limits_{\Omega} |f(z)-c|^s dxdy\right)^{\frac{1}{s}} \\
{} \leq \left(\iint\limits_\mathbb D \big|J(w,\varphi^{-1})\big|^{\beta}dudv \right)^{{\frac{1}{\beta}} \cdot \frac{1}{s}}
\inf_{c \in \mathbb R} \left(\iint\limits_{\Omega} |f(z)-c|^r h(z)dxdy\right)^{\frac{1}{r}} \\
{} \leq B_{r,2}(\mathbb D)
\left(\iint\limits_\mathbb D \big|J(w,\varphi^{-1})\big|^{\beta}dudv \right)^{{\frac{1}{\beta}} \cdot \frac{1}{s}}
\left(\iint\limits_\Omega \left\langle A(z) \nabla f(z), \nabla f(z) \right\rangle dxdy\right)^{\frac{1}{2}}
\end{multline*}
for $s\geq 1$.
\end{proof}

The following theorem gives compactness of the embedding operator in the limit case $\beta = \infty$:

\begin{theorem}\label{T4.5}
Let $A$ belongs to a class  $M^{2 \times 2}(\Omega)$ and $\Omega$ be an $A$-quasiconformal $\infty$-regular domain. Then:
\begin{enumerate}
\item The embedding operator
\[
i_{\Omega}:W^{1,2}_{A}(\Omega) \hookrightarrow L^2(\Omega),
\]
is compact.

\item For any function $f \in W^{1,2}_{A}(\Omega)$, the Poincar\'e--Sobolev inequality
\[
\inf\limits_{c \in \mathbb R}\|f-c\mid L^2(\Omega)\| \leq B_{2,2}(A,\Omega)
\|f\mid L^{1,2}_{A}(\Omega)\|
\]
holds with the constant $B_{2,2}(A,\Omega) \leq B_{2,2}(\mathbb D) \big\|J_{\varphi^{-1}}\mid L^{\infty}(\mathbb D)\big\|^{\frac{1}{2}}$,
where $J_{\varphi^{-1}}$ is a Jacobian of the quasiconformal mapping $\varphi^{-1}:\mathbb D\to\Omega$.
\end{enumerate}
\end{theorem}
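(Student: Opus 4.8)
The plan is to follow the scheme of the proof of Theorem~\ref{Th4.3}, replacing the H\"older inequality step (which degenerates at $\beta=\infty$) by a direct change of variables. Fix an $A$-quasiconformal mapping $\varphi:\Omega\to\mathbb D$; by $\infty$-regularity its inverse satisfies $J_{\varphi^{-1}}\in L^{\infty}(\mathbb D)$. The first thing I would record is that the change of variables formula for quasiconformal mappings \cite{VGR} gives, for every $u\in L^2(\mathbb D)$,
\[
\iint_\Omega |u\circ\varphi(z)|^2\,dxdy=\iint_{\mathbb D}|u(w)|^2|J(w,\varphi^{-1})|\,dudv\le \big\|J_{\varphi^{-1}}\mid L^{\infty}(\mathbb D)\big\|\iint_{\mathbb D}|u(w)|^2\,dudv,
\]
so the composition operator $\varphi^{*}:L^2(\mathbb D)\to L^2(\Omega)$ is bounded with $\|\varphi^{*}\|\le\big\|J_{\varphi^{-1}}\mid L^{\infty}(\mathbb D)\big\|^{1/2}$.

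For part (1) I would assemble the ``transfer'' diagram of \cite{GG94} exactly as in Theorem~\ref{Th4.3}. By Theorem~\ref{IsomSS} together with the ellipticity condition~\eqref{UEC} (which makes the $W^{1,2}_{A}(\Omega)$-norm equivalent to the $W^{1,2}(\Omega)$-norm), the operator $(\varphi^{-1})^{*}:W^{1,2}_{A}(\Omega)\to W^{1,2}(\mathbb D)$ is bounded; the embedding $i_{\mathbb D}:W^{1,2}(\mathbb D)\hookrightarrow L^2(\mathbb D)$ is compact \cite{M}; and $\varphi^{*}:L^2(\mathbb D)\to L^2(\Omega)$ is bounded by the estimate above. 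Since $\varphi^{*}\circ(\varphi^{-1})^{*}$ is the identity, the embedding $i_{\Omega}:W^{1,2}_{A}(\Omega)\hookrightarrow L^2(\Omega)$ factors as $\varphi^{*}\circ i_{\mathbb D}\circ(\varphi^{-1})^{*}$ and is therefore compact.

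For part (2) I would argue first for smooth $f$ and then pass to the limit. Writing $g=f\circ\varphi^{-1}\in W^{1,2}(\mathbb D)$, for every $c\in\mathbb R$ the change of variables gives $\iint_\Omega|f(z)-c|^2\,dxdy\le\big\|J_{\varphi^{-1}}\mid L^{\infty}(\mathbb D)\big\|\iint_{\mathbb D}|g(w)-c|^2\,dudv$. Taking the infimum over $c$, applying the classical Poincar\'e inequality in $\mathbb D$ with the best constant $B_{2,2}(\mathbb D)$, and then using the isometry of Theorem~\ref{IsomSS}, I obtain
\[
\inf_{c\in\mathbb R}\iint_\Omega|f(z)-c|^2\,dxdy\le\big\|J_{\varphi^{-1}}\mid L^{\infty}(\mathbb D)\big\|\,B_{2,2}(\mathbb D)^2\iint_\Omega\big\langle A(z)\nabla f(z),\nabla f(z)\big\rangle\,dxdy.
\]
Extracting square roots yields the inequality with $B_{2,2}(A,\Omega)\le B_{2,2}(\mathbb D)\big\|J_{\varphi^{-1}}\mid L^{\infty}(\mathbb D)\big\|^{1/2}$, and an approximation argument as in Theorem~\ref{Th4.1} removes the smoothness hypothesis.

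The step I expect to require the most care is justifying that $(\varphi^{-1})^{*}$ genuinely maps $W^{1,2}_{A}(\Omega)$ into $W^{1,2}(\mathbb D)$ (and not merely $L^{1,2}_{A}(\Omega)$ into $L^{1,2}(\mathbb D)$), so that the factorization $i_{\Omega}=\varphi^{*}\circ i_{\mathbb D}\circ(\varphi^{-1})^{*}$ holds on the full normed space; this is exactly where the equivalence of the weighted and unweighted Sobolev norms coming from~\eqref{UEC}, combined with the $\beta=\infty$ bound on $\varphi^{*}$ at the $L^2$ level, must be invoked. The remaining steps are a transcription of the finite-$\beta$ argument of Theorem~\ref{Th4.3}.
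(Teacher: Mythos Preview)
Your proposal is correct and follows essentially the same route as the paper. The only cosmetic difference is in part~(2): you change variables to $\mathbb D$ first, apply the classical Poincar\'e inequality there, and then use Theorem~\ref{IsomSS} to return, whereas the paper inserts $1=|J(z,\varphi)|^{-1}|J(z,\varphi)|$, pulls out $\|J_{\varphi^{-1}}\|_{L^\infty}$, and then invokes Theorem~\ref{Th4.1} (which packages exactly the change of variables and the Poincar\'e step you write out); the compactness argument in part~(1) is identical.
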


\begin{remark}
The constant $B_{2,2}^2(\mathbb D)=1/\mu_1(\mathbb D)$, where $\mu_1(\mathbb D)=j'^2_{1,1}$ is the first non-trivial Neumann eigenvalue of Laplacian in the unit disc $\mathbb D\subset\mathbb C$.
\end{remark}

\begin{proof}
Since $\Omega$ is an $A$-quasiconformal $\infty$-regular domain,
then there exists an $A$-quasiconformal mapping $\varphi: \Omega \to \mathbb D$ that
generates a bounded composition operator
\[
(\varphi^{-1})^{*}:L^{1,2}_{A}(\Omega) \to L^{1,2}(\mathbb D).
\]
Using  the embedding $L^{1,2}(\mathbb D)\subset L^2(\mathbb D)$ (see, for example, \cite{M}) we obtain that the composition operator on normed Sobolev spaces
\[
(\varphi^{-1})^{*}:W^{1,2}_{A}(\Omega) \to W^{1,2}(\mathbb D)
\]
is bounded also.

Because $\Omega$ is an $A$-quasiconformal $\infty$-regular domain, then the  $A$-quasiconformal mapping $\varphi: \Omega \to \mathbb D$
satisfies the following condition:
\[
\big\|J_{\varphi^{-1}}\mid L^{\infty}(\mathbb D)\big\|=\esssup\limits_{|w|<1}|J(w,\varphi^{-1})|<\infty,
\]
and we have that the composition operator
$$
\varphi^{*}:L^2(\mathbb D) \to L^2(\Omega)
$$
is bounded \cite{VU04}.

Finally, note that in the unit disc $\mathbb D$ the embedding operator
$$
i_{\mathbb D}:W^{1,2}(\mathbb D) \hookrightarrow L^2(\mathbb D)
$$
is compact (see, for example, \cite{M}).
Therefore the embedding operator
$$
i_{\Omega}:W^{1,2}_A(\Omega)\to L_2(\Omega)
$$
is compact as a composition of bounded composition operators $\varphi^{*}$, $(\varphi^{-1})^{*}$ and the compact embedding operator $i_{\mathbb D}$.

Let a  function $f \in L^{2}(\Omega)$. Because quasiconformal mappings possess the Luzin $N$-property, then
$|J(z,\varphi)|^{-1}=|J(w,\varphi^{-1})|$ for almost all $z\in\Omega$ and for almost all $w=\varphi(z)\in \mathbb D$. Hence
the following inequality is correct:
\begin{multline*}
\inf\limits_{c \in \mathbb R}\left(\iint\limits_\Omega |f(z)-c|^2dxdy\right)^{\frac{1}{2}} =
\inf\limits_{c \in \mathbb R}\left(\iint\limits_\Omega |f(z)-c|^2 |J(z,\varphi)|^{-1} |J(z,\varphi)|~dxdy\right)^{\frac{1}{2}} \\
{} \leq \big\|J_{\varphi}\mid L^{\infty}(\Omega)\big\|^{-\frac{1}{2}}
\inf\limits_{c \in \mathbb R}\left(\iint\limits_\Omega |f(z)-c|^2 |J(z,\varphi)|~dxdy\right)^{\frac{1}{2}}.
\end{multline*}

By Theorem~\ref{Th4.1} we obtain

\begin{multline*}
\inf\limits_{c \in \mathbb R}\left(\iint\limits_\Omega |f(z)-c|^2dxdy\right)^{\frac{1}{2}}
\leq \big\|J_{\varphi^{-1}}\mid L^{\infty}(\mathbb D)\big\|^{\frac{1}{2}}
\inf\limits_{c \in \mathbb R}\left(\iint\limits_\mathbb D |g(w)-c|^2~dudv\right)^{\frac{1}{2}} \\
{} \leq B_{2,2}(\mathbb D) \big\|J_{\varphi^{-1}}\mid L^{\infty}(\mathbb D)\big\|^{\frac{1}{2}}
\left(\iint\limits_\Omega \left\langle A(z) \nabla f(z), \nabla f(z) \right\rangle dxdy\right)^{\frac{1}{2}},
\end{multline*}
for any $f\in L_A^{1,2}(\Omega)$.

\end{proof}

\section{Eigenvalue Problem for Neumann Divergence Form Elliptic Operator}

We consider the weak formulation of the Neumann eigenvalue problem~(\ref{EllDivOper}):
\begin{equation}\label{WFWEP}
\iint\limits_\Omega \left\langle A(z )\nabla f(z), \nabla \overline{g(z)} \right\rangle dxdy
= \mu \iint\limits_\Omega f(z)\overline{g(z)}~dxdy, \,\,\, \forall g\in W_{A}^{1,2}(\Omega).
\end{equation}

By the Min--Max Principle (see, for example, \cite{Henr}) the first non-trivial Neumann eigenvalue $\mu_1(\Omega)$ of the divergence form elliptic operator $L_{A}=-\textrm{div} [A(z) \nabla f(z)]$ can be characterized as
$$
\mu_1(A,\Omega)=\min\left\{\frac{\|f \mid L^{1,2}_{A}(\Omega)\|^2}{\|f \mid L^{2}(\Omega)\|^2}:
f \in W^{1,2}_{A}(\Omega) \setminus \{0\},\,\, \iint\limits _{\Omega}f\, dxdy=0 \right\}.
$$

Hence $\mu_1(A,\Omega)^{-\frac{1}{2}}$ is the best constant $B_{2,2}(A,\Omega)$ in the following Poincar\'e inequality
$$
\inf\limits _{c \in \mathbb R} \|f-c \mid L^2(\Omega)\| \leq B_{2,2}(A,\Omega) \|f \mid L^{1,2}_{A}(\Omega)\|, \quad f \in W^{1,2}_{A}(\Omega).
$$

\begin{theorem}\label{Th5.1}
Let $A$ belongs to a class  $M^{2 \times 2}(\Omega)$ and $\Omega$ be an $A$-quasiconformal $\beta$-regular domain. Then the spectrum of the Neumann divergence form elliptic operator $L_{A}$ in $\Omega$ is discrete,
and can be written in the form of a non-decreasing sequence:
\[
0=\mu_0(A,\Omega)<\mu_1(A,\Omega)\leq \mu_2(A,\Omega)\leq \ldots \leq \mu_n(A,\Omega)\leq \ldots ,
\]
and
\[
\frac{1}{\mu_1(A,\Omega)} \leq B_{\frac{2\beta}{\beta -1},2}(\mathbb D)
\|J_{\varphi^{-1}}\mid L^{\beta}(\mathbb D)\|
{} \leq
\frac{4}{\sqrt[\beta]{\pi}} \left(\frac{2\beta -1}{\beta -1}\right)^{\frac{2 \beta-1}{\beta}} \big\|J_{\varphi^{-1}}\mid L^{\beta}(\mathbb D)\big\|,
\]
where $J_{\varphi^{-1}}$ is a Jacobian of the quasiconformal mapping $\varphi^{-1}:\mathbb D\to\Omega$.
\end{theorem}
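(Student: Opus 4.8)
The plan is to obtain the discreteness of the spectrum from the compactness of the embedding $W^{1,2}_A(\Omega)\hookrightarrow L^2(\Omega)$ proved in Theorem~\ref{Th4.3}, and then to extract the bound on $\mu_1(A,\Omega)$ from the Poincar\'e--Sobolev inequality of Theorem~\ref{Th4.3} specialized to $s=2$, combined with the explicit upper bound for $B_{r,2}(\mathbb D)$ recorded in Section~3.

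First I would record that, because $A$ satisfies the uniform ellipticity condition~\eqref{UEC}, the norm of $W^{1,2}_A(\Omega)$ is equivalent to the norm of $W^{1,2}(\Omega)$; hence $W^{1,2}_A(\Omega)$ is a Hilbert space densely and continuously embedded in $L^2(\Omega)$ (a $\beta$-regular domain has finite area, so the constants lie in $W^{1,2}_A(\Omega)$), and the symmetric bilinear form $a(f,g)=\iint_\Omega\langle A(z)\nabla f,\nabla g\rangle\,dxdy$ is closed and non-negative with form domain $W^{1,2}_A(\Omega)$. The self-adjoint operator associated with $a$ is the Neumann realization of $L_A$, cf. the weak formulation~\eqref{WFWEP}. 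By Theorem~\ref{Th4.3}(1) the embedding $W^{1,2}_A(\Omega)\hookrightarrow L^2(\Omega)$ is compact, so $L_A$ has compact resolvent; by the spectral theorem its spectrum is then a discrete set of eigenvalues of finite multiplicity with no finite accumulation point, which can be arranged as a non-decreasing sequence diverging to $+\infty$. Since $\Omega$ is connected, the kernel of $a$ consists precisely of the constants, so $\mu_0(A,\Omega)=0$ is simple, and the Poincar\'e inequality of Theorem~\ref{Th4.3}(2) restricted to the mean-zero subspace yields $\mu_1(A,\Omega)>0$. This establishes the first assertion.

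For the quantitative part I would invoke the Min--Max Principle, which identifies $\mu_1(A,\Omega)^{-\frac{1}{2}}$ with the best constant $B_{2,2}(A,\Omega)$ in the inequality $\inf_{c\in\mathbb R}\|f-c\mid L^2(\Omega)\|\le B_{2,2}(A,\Omega)\,\|f\mid L^{1,2}_A(\Omega)\|$. Applying Theorem~\ref{Th4.3}(2) with $s=2$ (so that the auxiliary exponent is $r=\frac{2\beta}{\beta-1}$, which lies in $(2,\infty)$ since $\beta>1$, whence $B_{r,2}(\mathbb D)$ is defined) and squaring gives
\[
\frac{1}{\mu_1(A,\Omega)}=B_{2,2}(A,\Omega)^2\le B_{\frac{2\beta}{\beta-1},2}(\mathbb D)^2\,\big\|J_{\varphi^{-1}}\mid L^\beta(\mathbb D)\big\|,
\]
which is the middle inequality. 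It then remains to substitute $r=\frac{2\beta}{\beta-1}$ into the bound $B_{r,2}(\mathbb D)\le(2^{-1}\pi)^{\frac{2-r}{2r}}(r+2)^{\frac{r+2}{2r}}$ from Section~3: one checks that $\frac{2-r}{2r}=-\frac{1}{2\beta}$, $\frac{r+2}{2r}=\frac{2\beta-1}{2\beta}$ and $r+2=\frac{2(2\beta-1)}{\beta-1}$, and after squaring and collecting the powers of $2$ this yields $B_{\frac{2\beta}{\beta-1},2}(\mathbb D)^2\le\frac{4}{\sqrt[\beta]{\pi}}\bigl(\frac{2\beta-1}{\beta-1}\bigr)^{\frac{2\beta-1}{\beta}}$, i.e. the right-hand inequality.

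Apart from this elementary constant computation, the only step that really needs care is the first one: verifying that the chain ``compact embedding of the form domain $\Rightarrow$ compact resolvent $\Rightarrow$ discrete spectrum'' genuinely applies to the weighted operator $L_A$ under the Neumann condition, and that $\mu_0=0$ is simple so that $\mu_1(A,\Omega)$ is strictly positive. Through the norm equivalence $W^{1,2}_A(\Omega)\simeq W^{1,2}(\Omega)$ and the connectedness of $\Omega$ these are standard facts, so I anticipate no real obstacle beyond careful bookkeeping.
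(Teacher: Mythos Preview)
Your proposal is correct and follows essentially the same route as the paper: invoke Theorem~\ref{Th4.3} with $s=2$ to obtain the compact embedding and hence discreteness, identify $\mu_1(A,\Omega)^{-1}$ with $B_{2,2}(A,\Omega)^2$ via the Min--Max Principle, apply the constant bound from Theorem~\ref{Th4.3}(2), and finish with the explicit estimate for $B_{r,2}(\mathbb D)$ at $r=\tfrac{2\beta}{\beta-1}$. Your version is in fact slightly more careful than the paper's on the spectral-theoretic step (compact embedding $\Rightarrow$ compact resolvent, simplicity of $\mu_0$), and you correctly write $B_{\frac{2\beta}{\beta-1},2}(\mathbb D)^{2}$ in the intermediate bound---the unsquared $B$ in the displayed statement is a typo, as the paper's own proof confirms.
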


\begin{proof}
By Theorem~\ref{Th4.3} in the case $s=2$, the embedding operator
$$
i_{\Omega}:W^{1,2}_{A}(\Omega)\to L_2(\Omega)
$$
is compact. Hence the spectrum of the  Neumann divergence form elliptic operator $L_{A}$ is discrete and can be written in the form of a non-decreasing sequence
\[
0=\mu_0(A,\Omega)<\mu_1(A,\Omega)\leq \mu_2(A,\Omega)\leq \ldots \leq \mu_n(A,\Omega)\leq \ldots .
\]

By the Min-Max principle and Theorem~\ref{Th4.3} we have
\[
\inf_{c \in \mathbb R} \left(\iint\limits_{\Omega} |f(z)-c|^2 dxdy\right) \leq B^2_{2,2}(A,\Omega)
\iint\limits_\Omega \left\langle A(z) \nabla f(z), \nabla f(z) \right\rangle dxdy,
\]
where
\[
B_{2,2}(A,\Omega) \leq B_{r,2}(\mathbb D)
\left(\iint\limits_\mathbb D \big|J(w,\varphi^{-1})\big|^{\beta}~dudv \right)^{{\frac{1}{2\beta}}}.
\]

Hence
\[
\frac{1}{\mu_1(A,\Omega)} \leq B^2_{r,2}(\mathbb D)
\left(\iint\limits_\mathbb D \big|J(w,\varphi^{-1})\big|^{\beta}~dudv \right)^{{\frac{1}{\beta}}}.
\]

Using the upper estimate of the $(r,2)$-Poincar\'e constant in the unit disc (see, for example, \cite{GT77,GU16})
\[
B_{r,2}(\mathbb D) \leq \left(2^{-1} \pi\right)^{\frac{2-r}{2r}}\left(r+2\right)^{\frac{r+2}{2r}},
\]
where by Theorem~\ref{Th4.3}, $r=2\beta /(\beta -1)$,
we obtain
\[
\frac{1}{\mu_1(A,\Omega)} \leq
\frac{4}{\sqrt[\beta]{\pi}} \left(\frac{2\beta -1}{\beta -1}\right)^{\frac{2 \beta-1}{\beta}} \big\|J_{\varphi^{-1}}\mid L^{\beta}(\mathbb D)\big\|.
\]
\end{proof}

In the case of $A$-quasiconformal $\infty$-regular domains we have:

\begin{theorem}\label{T4.7}
Let $A$ belongs to a class  $M^{2 \times 2}(\Omega)$ and $\Omega$ be an $A$-quasiconformal $\infty$-regular domain. Then the spectrum of the Neumann divergence form elliptic operator $L_{A}$ in $\Omega$ is discrete,
and can be written in the form of a non-decreasing sequence:
\[
0=\mu_0(A,\Omega)<\mu_1(A,\Omega)\leq \mu_2(A,\Omega)\leq \ldots \leq \mu_n(A,\Omega)\leq \ldots ,
\]
and
\begin{equation}
\frac{1}{\mu_1(A,\Omega)} \leq B^2_{2,2}(\mathbb D) \big\|J_{\varphi^{-1}}\mid L^{\infty}(\mathbb D)\big\|
= \frac{\big\|J_{\varphi^{-1}}\mid L^{\infty}(\mathbb D)\big\|}{(j'_{1,1})^2},
\end{equation}
where $j'_{1,1}\approx 1.84118$ denotes the first positive zero of the derivative of the Bessel function $J_1$, and
$J_{\varphi^{-1}}$ is a Jacobian of the quasiconformal mapping $\varphi^{-1}:\mathbb D\to\Omega$.
\end{theorem}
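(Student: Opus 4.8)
The plan is to mirror the proof of Theorem~\ref{Th5.1}, replacing the role of Theorem~\ref{Th4.3} by its limiting ($\beta=\infty$) counterpart, Theorem~\ref{T4.5}, and then reading off the explicit constant from the Remark following that theorem.

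First I would establish discreteness. Since $\Omega$ is assumed to be an $A$-quasiconformal $\infty$-regular domain, part~(1) of Theorem~\ref{T4.5} gives that the embedding operator $i_\Omega : W^{1,2}_A(\Omega)\hookrightarrow L^2(\Omega)$ is compact. By the uniform ellipticity condition~\eqref{UEC} the seminorm $\|\cdot\mid L^{1,2}_A(\Omega)\|$ is equivalent to $\|\cdot\mid L^{1,2}(\Omega)\|$, so the quadratic form in the weak formulation~\eqref{WFWEP} is a closed symmetric form with form domain $W^{1,2}_A(\Omega)=W^{1,2}(\Omega)$; combined with the compactness of $i_\Omega$ this yields an operator $L_A$ with compact resolvent, hence a discrete spectrum accumulating only at $+\infty$. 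Since $\Omega$ is connected, the kernel of the form consists exactly of the constants, so $\mu_0=0$ is simple and $\mu_1(A,\Omega)>0$; thus the eigenvalues arrange into the stated non-decreasing sequence $0=\mu_0(A,\Omega)<\mu_1(A,\Omega)\le\mu_2(A,\Omega)\le\cdots$.

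Next I would extract the eigenvalue estimate from the Poincar\'e--Sobolev inequality. By the Min--Max (Rayleigh quotient) characterization recalled just before Theorem~\ref{Th5.1}, $\mu_1(A,\Omega)^{-1/2}$ is precisely the best constant $B_{2,2}(A,\Omega)$ in
\[
\inf_{c\in\mathbb R}\|f-c\mid L^2(\Omega)\| \le B_{2,2}(A,\Omega)\,\|f\mid L^{1,2}_A(\Omega)\|,\qquad f\in W^{1,2}_A(\Omega).
\]
Part~(2) of Theorem~\ref{T4.5} bounds this constant by $B_{2,2}(A,\Omega)\le B_{2,2}(\mathbb D)\,\|J_{\varphi^{-1}}\mid L^\infty(\mathbb D)\|^{1/2}$. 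Squaring gives
\[
\frac{1}{\mu_1(A,\Omega)} \le B^2_{2,2}(\mathbb D)\,\|J_{\varphi^{-1}}\mid L^\infty(\mathbb D)\|,
\]
and invoking the Remark after Theorem~\ref{T4.5}, $B^2_{2,2}(\mathbb D)=1/\mu_1(\mathbb D)=1/(j'_{1,1})^2$, turns the right-hand side into $\|J_{\varphi^{-1}}\mid L^\infty(\mathbb D)\|/(j'_{1,1})^2$, which is the claimed bound.

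The hard part will not be analytic, since everything rests on Theorem~\ref{T4.5}; it is rather a matter of bookkeeping. One must check that $\infty$-regularity is the correct substitute for $\beta$-regularity, i.e.\ that the composition operator $\varphi^{*}:L^2(\mathbb D)\to L^2(\Omega)$ is bounded with operator norm controlled by $\|J_{\varphi^{-1}}\mid L^\infty(\mathbb D)\|^{1/2}$, and that letting $r=2\beta/(\beta-1)\to 2$ as $\beta\to\infty$ lands exactly on the sharp Neumann--Poincar\'e constant $B_{2,2}(\mathbb D)=1/j'_{1,1}$ of the disc, so that the model inequality is not lossy. One should also verify that in the Poincar\'e inequality the infimum over $c$ may be replaced by the mean value, so that the Rayleigh quotient is genuinely taken over the codimension-one space of mean-zero functions required by the Min--Max principle.
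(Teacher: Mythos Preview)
Your proposal is correct and matches the paper's intended approach: the paper does not write out a separate proof of Theorem~\ref{T4.7}, tacitly treating it as the $\beta=\infty$ analogue of Theorem~\ref{Th5.1} with Theorem~\ref{T4.5} replacing Theorem~\ref{Th4.3} and the Remark supplying $B_{2,2}^2(\mathbb D)=1/(j'_{1,1})^2$. Your write-up makes this explicit and adds the standard spectral bookkeeping (closed form, compact resolvent, simplicity of $\mu_0$) that the paper also leaves unstated.
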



As an applications of Theorem~\ref{T4.7} we consider some examples.

\begin{example}
\label{example1}
The homeomorphism
\[
\varphi(z)= \frac{a}{a^2-b^2}z- \frac{b}{a^2-b^2} \overline{z}, \quad z=x+iy, \quad a>b\geq 0,
\]
is an $A$-quasiconformal and maps the interior of ellipse
$$
\Omega_e= \left\{(x,y) \in \mathbb R^2: \frac{x^2}{(a+b)^2}+\frac{y^2}{(a-b)^2}=1\right\}
$$
onto the unit disc $\mathbb D.$ The mapping $\varphi$ satisfies the Beltrami equation with
\[
\mu(z)=\frac{\varphi_{\overline{z}}}{\varphi_{z}}=-\frac{b}{a}
\]
and the Jacobian $J(z,\varphi)=|\varphi_{z}|^2-|\varphi_{\overline{z}}|^2=1/(a^2-b^2)$.
It is easy to verify that $\mu$ induces, by formula \eqref{Matrix-F}, the matrix function $A(z)$ form
$$
A(z)=\begin{pmatrix} \frac{a+b}{a-b} & 0 \\ 0 &  \frac{a-b}{a+b} \end{pmatrix}.
$$
Given that $|J(w,\varphi^{-1})|=|J(z,\varphi)|^{-1}=a^2-b^2$. Then by Theorem~\ref{T4.7} we have
$$
\frac{1}{\mu_1(A,\Omega_e)} \leq
\frac{1}{(j'_{1,1})^2} \esssup\limits_{|w|<1}|J(w,\varphi^{-1})| = \frac{a^2-b^2}{(j'_{1,1})^2}.
$$

The classical estimate~\ref{PW} with the uniform ellipticity condition states that
$$
\mu_1(A,\Omega_e) \geq \frac{\pi^2}{4(a+b)^2} \frac{a-b}{a+b}
$$
and we have that
$$
\frac{\pi^2}{4(a+b)^2} \frac{a-b}{a+b}< \frac{(j'_{1,1})^2}{a^2-b^2}.
$$
\end{example}

\begin{example}
The homeomorphism
\[
\varphi(z)= \frac{z^{\frac{3}{2}}}{\sqrt{2} \cdot \overline{z}^{\frac{1}{2}}}-1,\,\, \varphi(0)=-1, \quad z=x+iy,
\]
is an $A$-quasiconformal and maps the interior of the ``rose petal"
$$
\Omega_p:=\left\{(\rho, \theta) \in \mathbb R^2:\rho=2\sqrt{2}\cos(2 \theta), \quad -\frac{\pi}{4} \leq \theta \leq \frac{\pi}{4}\right\}
$$
onto the unit disc $\mathbb D$.
The mapping $\varphi$ satisfies the Beltrami equation with
\[
\mu(z)=\frac{\varphi_{\overline{z}}}{\varphi_{z}}=-\frac{1}{3}\frac{z}{\overline{z}}
\]
and the Jacobian $J(z,\varphi)=|\varphi_{z}|^2-|\varphi_{\overline{z}}|^2=1$.
We see that $\mu$ induces, by formula \eqref{Matrix-F}, the matrix function $A(z)$ form
$$
A(z)=\begin{pmatrix} \frac{|3\overline{z}+z|^2}{8|\overline{z}|^2} & \frac{3}{4}\Imag \frac{z}{\overline{z}} \\ \frac{3}{4}\Imag \frac{z}{\overline{z}} & \frac{|3\overline{z}-z|^2}{8|\overline{z}|^2} \end{pmatrix}.
$$
Given that $|J(w,\varphi^{-1})|=|J(z,\varphi)|^{-1}=1$. Then by Theorem~\ref{T4.7} we have
$$
\frac{1}{\mu_1(A,\Omega_p)} \leq
\frac{1}{(j'_{1,1})^2} \esssup\limits_{|w|<1}|J(w,\varphi^{-1})| = \frac{1}{(j'_{1,1})^2}.
$$

The classical estimate~\ref{PW} with the uniform ellipticity condition states that
$$
\mu_1(A,\Omega_p) \geq \left(\frac{\pi}{4}\right)^2
$$
and we have that
$$
\left(\frac{\pi}{4}\right)^2<(j'_{1,1})^2 \quad \text{or} \quad \frac{\pi}{4}<j'_{1,1}.
$$

\end{example}

\begin{example}
The homeomorphism
\[
\varphi(z)= \frac{2 \cdot z^{\frac{3}{8}}}{\overline{z}^{\frac{1}{8}}}-1,\,\, \varphi(0)=-1, \quad z=x+iy,
\]
is an $A$-quasiconformal and maps the interior of the non-convex domain
$$
\Omega_c:=\left\{(\rho, \theta) \in \mathbb R^2:\rho=\cos^{4}\left(\frac{\theta}{2}\right), \quad - \pi \leq \theta \leq \pi\right\}
$$
onto the unit disc $\mathbb D$.
The mapping $\varphi$ satisfies the Beltrami equation with
\[
\mu(z)=\frac{\varphi_{\overline{z}}}{\varphi_{z}}=-\frac{1}{3}\frac{z}{\overline{z}}
\]
and the Jacobian
$$J(z,\varphi)=|\varphi_{z}|^2-|\varphi_{\overline{z}}|^2=\frac{1}{2\cdot |z|^{\frac{3}{2}}}.
$$
We see that $\mu$ induces, by formula \eqref{Matrix-F}, the matrix function $A(z)$ form
$$
A(z)=\begin{pmatrix} \frac{|3\overline{z}+z|^2}{8|\overline{z}|^2} & \frac{3}{4}\Imag \frac{z}{\overline{z}} \\ \frac{3}{4}\Imag \frac{z}{\overline{z}} & \frac{|3\overline{z}-z|^2}{8|\overline{z}|^2} \end{pmatrix}.
$$
Given that $|J(w,\varphi^{-1})|=|J(z,\varphi)|^{-1}=2\cdot |z|^{\frac{3}{2}}$. Then by Theorem~\ref{T4.7} we have
$$
\frac{1}{\mu_1(A,\Omega_c)} \leq
\frac{1}{(j'_{1,1})^2} \esssup\limits_{|w|<1}|J(w,\varphi^{-1})| \leq \frac{2}{(j'_{1,1})^2}.
$$
\end{example}

\section{Spectral estimates in quasidiscs}

In this section we precise Theorem~\ref{Th5.1} for Ahlfors-type domains (i.e. quasidiscs) using the weak inverse H\"older inequality and the sharp estimates of the constants in doubling conditions for measures generated by Jacobians of quasiconformal mappings  \cite{GPU17_2}.

Recall that a domain $\Omega$ is called a $K$-quasidisc if it is the image of the unit disc $\mathbb D$ under a $K$-quasicon\-for\-mal homeomorphism of the plane onto itself. A domain $\Omega$ is a quasidisc if it is a $K$-quasidisc for some $K \geq 1$.

According to \cite{GH01}, the boundary of any $K$-quasidisc $\Omega$
admits a $K^{2}$-quasi\-con\-for\-mal reflection and thus, for example,
any quasiconformal homeomorphism $\psi:\mathbb{D}\to\Omega$ can be
extended to a $K^{2}$-quasiconformal homeomorphism of the whole plane
to itself.

Recall that for any planar $K$-quasiconformal homeomorphism $\psi:\Omega\rightarrow \Omega'$
the following sharp result is known: $J(w,\psi)\in L^p_{\loc}(\Omega)$
for any $1 \leq p<\frac{K}{K-1}$ (\cite{Ast,G81}).

In \cite{GPU17_2} was proved but not formulated the result concerning an estimate of the constant in the inverse H\"older inequality for Jacobians of quasiconformal mappings.

\vskip 0.2cm

\begin{theorem}
\label{thm:IHIN}
Let $\psi:\mathbb R^2 \to \mathbb R^2$ be a $K$-quasiconformal mapping. Then for every disc $\mathbb D \subset \mathbb R^2$ and
for any $1<\kappa<\frac{K}{K-1}$ the inverse H\"older inequality
\begin{equation*}\label{RHJ}
\left(\iint\limits_{\mathbb D} |J(w,\psi)|^{\kappa}~dudv \right)^{\frac{1}{\kappa}}
\leq \frac{C_\kappa^2 K \pi^{\frac{1}{\kappa}-1}}{4}
\exp\left\{{\frac{K \pi^2(2+ \pi^2)^2}{2\log3}}\right\}\iint\limits_{\mathbb D} |J(w,\psi)|~dudv
\end{equation*}
holds. Here
$$
C_\kappa=\frac{10^{6}}{[(2\kappa -1)(1- \nu)]^{1/2\kappa}}, \quad \nu = 10^{8 \kappa}\frac{2\kappa -2}{2\kappa -1}(24\pi^2K)^{2\kappa}<1.
$$
\end{theorem}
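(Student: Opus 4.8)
The plan is to derive the inverse H\"older inequality from two quantitative facts that are implicit in \cite{GPU17_2}: a doubling estimate, with an explicitly controlled constant, for the pullback measure $\mu_\psi(E)=\iint_E|J(w,\psi)|\,dudv$, and a form of Gehring's lemma in which all constants are tracked. Since a planar $K$-quasiconformal mapping $\psi$ is differentiable almost everywhere and enjoys the Luzin $N$- and $N^{-1}$-properties, the change-of-variables formula gives $\mu_\psi(E)=|\psi(E)|$ for every measurable $E\subset\mathbb R^2$. Both quasiconformality and all of the constants below are invariant under the affine normalizations $w\mapsto(w-w_0)/r$, so it is enough to prove the inequality for a fixed disc, say the unit disc.

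First I would show that $\mu_\psi$ is doubling, $\mu_\psi(2B)\le C_D\,\mu_\psi(B)$ for every disc $B$, with $C_D\le\exp\{K\pi^2(2+\pi^2)^2/(2\log 3)\}$; the elementary factor $\pi^{1/\kappa-1}/4$ in the statement is collected separately at the end. This is the distortion theorem for $K$-quasiconformal maps in modulus form: the round ring $B(w_0,3r)\setminus\overline{B(w_0,r)}$ has conformal modulus $\tfrac{1}{2\pi}\log 3$, its $\psi$-image is a ring separating $\psi(B(w_0,r))$ from the complement of $\psi(B(w_0,3r))$ and hence has modulus at least $\tfrac{1}{2\pi K}\log 3$, and the Gr\"otzsch--Teichm\"uller bounds for such rings control $\mathrm{diam}\,\psi(B(w_0,2r))$ and $\dist(\psi(w_0),\partial\psi(B(w_0,3r)))$ in terms of one another; squaring these and using $\mu_\psi(B)=|\psi(B)|$ produces the stated exponential constant. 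The $\log 3$ in the denominator reflects the use of the ratio-$3$ ring, which guarantees that the two boundary components of the image ring stay genuinely separated.

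With doubling in hand I would run the quantitative Gehring iteration. The Jacobian satisfies a weak reverse H\"older inequality at the bottom exponent: combining $|D\psi(w)|^2\le K|J(w,\psi)|$ with the Sobolev--Poincar\'e inequality for $\psi\in W^{1,2}_{\loc}$ on discs yields, for a suitable $s>1$ and a constant $c(K)$ depending only on $K$,
\[ \frac{1}{|B|}\iint_{B}|J(w,\psi)|\,dudv\ \le\ \frac{1}{|B|}\iint_{B}|D\psi|^2\,dudv\ \le\ c(K)\left(\frac{1}{|2B|}\iint_{2B}|J(w,\psi)|^{1/s}\,dudv\right)^{s}. \]
Feeding this into the Calder\'on--Zygmund stopping-time argument, i.e.\ decomposing the super-level sets $\{|J(\cdot,\psi)|>\lambda\}$, using the weak $(1,1)$-bound for the planar Hardy--Littlewood maximal operator (its constant is what appears as the $24\pi^2$ inside $\nu$), and invoking the doubling property to pass from $2B$ back to $B$, gives for $1<\kappa<K/(K-1)$
\[ \left(\iint_{\mathbb D}|J(w,\psi)|^{\kappa}\,dudv\right)^{1/\kappa}\ \le\ C_\kappa^2\,\frac{K\,\pi^{\frac1\kappa-1}}{4}\,C_D\iint_{\mathbb D}|J(w,\psi)|\,dudv, \]
and the closed form $C_\kappa=10^{6}/[(2\kappa-1)(1-\nu)]^{1/2\kappa}$ together with the admissibility condition $\nu=10^{8\kappa}\frac{2\kappa-2}{2\kappa-1}(24\pi^2K)^{2\kappa}<1$ emerge from summing the geometric series produced by the iteration: the factor $\frac{2\kappa-2}{2\kappa-1}$, which vanishes as $\kappa\to1$, is what forces convergence, and the $1-\nu$ in the denominator is the value of that sum. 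Substituting the doubling bound for $C_D$ completes the argument.

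The main obstacle is the explicit constant bookkeeping rather than any conceptual step: the existence of a reverse H\"older inequality for $|J(w,\psi)|$ is classical (Bojarski, Gehring; the optimal exponent $K/(K-1)$ is Astala's theorem, cf.\ \cite{Ast,G81}), but producing the stated closed forms requires (i) a sufficiently good weak $(1,1)$-constant for the two-dimensional maximal function and careful propagation of it through the iteration, (ii) passing from the weak reverse H\"older inequality on the doubled disc $2B$ to a genuine one on $B$ by a Whitney covering with explicitly bounded overlap, and (iii) keeping every estimate affine-invariant so that the final inequality holds for an arbitrary disc $\mathbb D\subset\mathbb R^2$. Accordingly I would structure the write-up as: normalize to the unit disc; prove the doubling estimate with the exponential constant; run the tracked Gehring iteration following \cite{GPU17_2} step by step; and read off $C_\kappa$ and the condition on $\nu$.
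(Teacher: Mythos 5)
The paper does not prove this theorem at all: it states it and attributes it to \cite{GPU17_2} (``In \cite{GPU17_2} was proved but not formulated the result concerning an estimate of the constant\ldots''), so there is no in-paper argument to compare against. Your reconstruction --- doubling of the pullback measure $\mu_\psi(E)=|\psi(E)|$ via modulus/distortion estimates, then a constant-tracked Gehring/Calder\'on--Zygmund iteration using the weak $(1,1)$ maximal-function bound --- is exactly the standard mechanism for producing explicit reverse H\"older constants for quasiconformal Jacobians, and it plausibly matches what \cite{GPU17_2} does (that paper, and \cite{GU16,BGMR}, are where these numerical constants come from). So the approach is sound.

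A few points worth tightening. First, you invoke two partially overlapping ingredients --- doubling of $\mu_\psi$ via ring moduli, and a weak reverse H\"older estimate via $|D\psi|^2\le K|J|$ plus Caccioppoli/Sobolev--Poincar\'e --- and it is not made clear which of the two actually supplies the factor $\exp\{K\pi^2(2+\pi^2)^2/(2\log3)\}$ and which feeds the iteration; a clean write-up would commit to one structure (e.g.\ doubling gives $C_D$, the Gehring iteration gives $C_\kappa$ and $\nu$, and they are multiplied at the end, as in your final display). Second, note that the factor $\pi^{1/\kappa-1}$ in the statement is $|\mathbb D|^{1/\kappa-1}$ specialized to the unit disc; as written (``for every disc $\mathbb D\subset\mathbb R^2$'') the inequality is only scale-consistent if one replaces $\pi^{1/\kappa-1}$ by $|\mathbb D|^{1/\kappa-1}$, so your normalization to the unit disc is in fact necessary for the stated constant to be literally correct, and it would be worth flagging this explicitly. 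Third, and this is the substantive gap: the closed forms $C_\kappa=10^{6}/[(2\kappa-1)(1-\nu)]^{1/2\kappa}$ and $\nu=10^{8\kappa}\frac{2\kappa-2}{2\kappa-1}(24\pi^2K)^{2\kappa}$ are asserted to ``emerge from summing the geometric series,'' but that is precisely the bookkeeping that makes this theorem nontrivial; nothing in your sketch shows why the series constant is $10^{6}$ rather than some other numerical value, nor where the $10^{8\kappa}$ comes from in $\nu$. Your outline is therefore a correct plan rather than a proof: the conceptual steps are right, but the part that actually needs to be checked against \cite{GPU17_2} --- propagating the weak-$(1,1)$ constant $24\pi^2$, the Whitney overlap constants, and the doubling constant through the stopping-time argument to land on those specific numbers --- is left open.
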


If $\Omega$ is a $K$-quasidisc, then given the previous theorem and that a quasiconformal mapping $\psi:\mathbb{D}\to\Omega$ allows $K^2$-quasiconformal reflection \cite{Ahl66, GH01}, we obtain the following assertion.

\begin{corollary}\label{Est_Der}
Let $\Omega\subset\mathbb R^2$ be a $K$-quasidisc and $\varphi:\Omega \to \mathbb D$ be an $A$-quasiconformal mapping. Assume that  $1<\kappa<\frac{K}{K-1}$.
Then
\begin{equation*}\label{Ineq_2}
\left(\iint\limits_{\mathbb D} |J(w,\varphi^{-1})|^{\kappa}~dudv \right)^{\frac{1}{\kappa}}
\leq \frac{C_\kappa^2 K^2 \pi^{\frac{1}{\kappa}-1}}{4}
\exp\left\{{\frac{K^2 \pi^2(2+ \pi^2)^2}{2\log3}}\right\}\cdot |\Omega|.
\end{equation*}
where
$$
C_\kappa=\frac{10^{6}}{[(2\kappa -1)(1- \nu)]^{1/2\kappa}}, \quad \nu = 10^{8 \kappa}\frac{2\kappa -2}{2\kappa -1}(24\pi^2K^2)^{2\kappa}<1.
$$
\end{corollary}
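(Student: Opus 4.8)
The plan is to combine Theorem~\ref{thm:IHIN} with the quasiconformal reflection property of quasidiscs, together with the normalization $\det A=1$ that forces $|J(z,\varphi)|\cdot|J(w,\varphi^{-1})|=1$ in the appropriate sense. First I would observe that, since $\Omega$ is a $K$-quasidisc, there is a quasiconformal homeomorphism $\psi:\mathbb D\to\Omega$; by \cite{GH01} (and the discussion preceding the corollary) this $\psi$ extends to a $K^2$-quasiconformal homeomorphism of the whole plane $\mathbb R^2$ onto itself. On the other hand, the $A$-quasiconformal mapping $\varphi:\Omega\to\mathbb D$ has, by the previous lemma in Section~2, inverse $\varphi^{-1}:\mathbb D\to\Omega$ which is $A^{-1}$-quasiconformal; in particular $\varphi^{-1}$ is a planar quasiconformal mapping of $\mathbb D$ into $\mathbb R^2$. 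The key point is that because $\det A=1$, the mapping $\varphi$ has unit Jacobian a.e., hence $\varphi^{-1}$ also has $|J(w,\varphi^{-1})|$ constrained; more precisely one checks that the composition $\varphi^{-1}=\psi\circ(\psi^{-1}\circ\varphi^{-1})$ and that, since both $\varphi$ and the conformal/measurable data only change the metric distortion constant, $\varphi^{-1}$ extends to a $K^2$-quasiconformal self-homeomorphism of $\mathbb R^2$ (the composition of the $K^2$-quasiconformal extension of $\psi$ with a conformal automorphism of $\mathbb D$). Thus $\varphi^{-1}$ is a globally defined $K^2$-quasiconformal mapping of the plane to which Theorem~\ref{thm:IHIN} applies directly with $K$ replaced by $K^2$.

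Next I would apply Theorem~\ref{thm:IHIN} to the $K^2$-quasiconformal mapping $\psi=\varphi^{-1}$ on the disc $\mathbb D$: for any $1<\kappa<\frac{K^2}{K^2-1}$,
\[
\left(\iint\limits_{\mathbb D} |J(w,\varphi^{-1})|^{\kappa}\,dudv\right)^{\frac{1}{\kappa}}
\leq \frac{C_\kappa^2 K^2 \pi^{\frac{1}{\kappa}-1}}{4}
\exp\left\{\frac{K^2\pi^2(2+\pi^2)^2}{2\log 3}\right\}
\iint\limits_{\mathbb D} |J(w,\varphi^{-1})|\,dudv ,
\]
where $C_\kappa$ and $\nu$ are exactly as in Theorem~\ref{thm:IHIN} but with $K^2$ in place of $K$, which accounts for the statement's $(24\pi^2K^2)^{2\kappa}$ and $\exp\{K^2\cdots\}$ and $K^2$ prefactor. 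Here one must also restrict the range of $\kappa$ to $1<\kappa<\frac{K^2}{K^2-1}$, which is implicit in the corollary's hypothesis $1<\kappa<\frac{K}{K-1}$ together with the elementary inclusion $\frac{K}{K-1}\le\frac{K^2}{K^2-1}$ for $K\ge 1$; I would note that so that the application of Theorem~\ref{thm:IHIN} is legitimate. Finally, I would identify the remaining integral: since $\varphi^{-1}:\mathbb D\to\Omega$ is a homeomorphism with the Luzin $N$-property, the change of variables formula gives
\[
\iint\limits_{\mathbb D} |J(w,\varphi^{-1})|\,dudv = |\varphi^{-1}(\mathbb D)| = |\Omega| ,
\]
and substituting this into the previous display yields precisely the claimed inequality.

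The main obstacle I anticipate is justifying cleanly that the $A$-quasiconformal mapping $\varphi:\Omega\to\mathbb D$ — a priori only a quasiconformal map \emph{between} $\Omega$ and $\mathbb D$, not of the whole plane — produces an inverse $\varphi^{-1}$ to which the \emph{global} result Theorem~\ref{thm:IHIN} (stated for $K$-quasiconformal mappings $\mathbb R^2\to\mathbb R^2$) may be applied, and with the correct distortion constant $K^2$ rather than $K$. The resolution uses two facts already available in the excerpt: that $A$-quasiconformal mappings $\Omega\to\mathbb D$ are unique up to conformal automorphisms of $\mathbb D$ (so one may pre-compose with a Möbius automorphism of $\mathbb D$ without changing any of the integrals over $\mathbb D$), and that a quasiconformal homeomorphism $\mathbb D\to\Omega$ of a $K$-quasidisc extends to a $K^2$-quasiconformal homeomorphism of $\mathbb R^2$ by \cite{GH01}. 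Combining these, $\varphi^{-1}$ differs from such an extendable map by a conformal automorphism of $\mathbb D$, hence itself extends to a $K^2$-quasiconformal self-map of the plane; the dilatation bound $K^2$ is exactly what feeds through Theorem~\ref{thm:IHIN} to give the stated constants. Everything else — the change of variables, the bookkeeping of $C_\kappa$, $\nu$, and the exponential factor — is a direct substitution of $K\mapsto K^2$ into Theorem~\ref{thm:IHIN}.
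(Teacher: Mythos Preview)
Your overall strategy---extend $\varphi^{-1}$ to a $K^2$-quasiconformal self-map of the plane via the quasidisc reflection property, apply Theorem~\ref{thm:IHIN} with $K$ replaced by $K^2$, and identify $\iint_{\mathbb D}|J(w,\varphi^{-1})|\,dudv=|\Omega|$---is exactly the paper's argument. However, several of the justifications you insert along the way are incorrect.

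First, the claim that $\det A=1$ forces $|J(z,\varphi)|=1$ is simply false: Example~\ref{example1} has $\det A=1$ yet $J(z,\varphi)=1/(a^2-b^2)$. The condition $\det A=1$ constrains the \emph{dilatation} of $\varphi$, not its Jacobian; drop this remark entirely. Second, your assertion that $\psi^{-1}\circ\varphi^{-1}$ is a conformal automorphism of $\mathbb D$ is wrong. The uniqueness-up-to-M\"obius statement in the paper applies only among $A$-quasiconformal maps with the \emph{same} matrix $A$; the quasidisc map $\psi$ has no reason to share the Beltrami coefficient of $\varphi^{-1}$, so $\psi^{-1}\circ\varphi^{-1}$ is in general only quasiconformal. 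The paper sidesteps this by invoking directly the statement (from \cite{GH01}, quoted before the corollary) that \emph{any} quasiconformal $\psi:\mathbb D\to\Omega$ extends to a $K^2$-quasiconformal map of the plane when $\Omega$ is a $K$-quasidisc; you should do the same rather than attempt a reduction via conformal automorphisms. Third, your ``elementary inclusion'' $\tfrac{K}{K-1}\le\tfrac{K^2}{K^2-1}$ is backwards: since $K^2-1=(K-1)(K+1)$ one has $\tfrac{K^2}{K^2-1}=\tfrac{K}{K-1}\cdot\tfrac{K}{K+1}<\tfrac{K}{K-1}$ for $K>1$, so the range $\kappa<\tfrac{K}{K-1}$ does \emph{not} automatically fit inside $\kappa<\tfrac{K^2}{K^2-1}$. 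The paper does not address this point explicitly, and you should not manufacture a false inequality to paper over it.
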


Combining Theorem~\ref{Th5.1} and Corollary~\ref{Est_Der} we obtain
spectral estimates of linear elliptic operators in divergence form
with Neumann boundary conditions in Ahlfors-type domains.

\begin{theorem}\label{Quasidisk}
Let $\Omega$ be a $K$-quasidisc. Then
\begin{equation*}
\mu_1(A,\Omega) \geq \frac{M(K)}{|\Omega|}=\frac{M^{*}(K)}{R^2_{*}},
\end{equation*}
where $R_{*}$ is a radius of a disc $\Omega^{*}$ of the same area as $\Omega$ and
$M^{*}(K)=M(K)\pi^{-1}$.
\end{theorem}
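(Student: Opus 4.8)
The plan is to combine the general spectral estimate of Theorem~\ref{Th5.1} with the quantitative integrability bound for Jacobians of quasiconformal mappings supplied by Corollary~\ref{Est_Der}, and then to rewrite the resulting area-dependence in terms of the radius $R_{*}$.

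First I would fix the $A$-quasiconformal mapping $\varphi:\Omega\to\mathbb D$ associated with the matrix $A$; it exists because a $K$-quasidisc is a bounded simply connected Jordan domain and $A$ satisfies~\eqref{UEC}. Since $\Omega$ is a $K$-quasidisc, a quasiconformal homeomorphism $\mathbb D\to\Omega$ extends to a $K^{2}$-quasiconformal homeomorphism of the whole plane, so by the sharp integrability result \cite{Ast,G81} (made quantitative in Corollary~\ref{Est_Der}) we have $J_{\varphi^{-1}}\in L^{\kappa}(\mathbb D)$ for every $\kappa$ in a nonempty range: for $1<\kappa<\frac{K}{K-1}$ with $\nu<1$,
\[
\left(\iint_{\mathbb D}|J(w,\varphi^{-1})|^{\kappa}\,dudv\right)^{\frac{1}{\kappa}}\le \frac{C_\kappa^{2}K^{2}\pi^{\frac{1}{\kappa}-1}}{4}\exp\left\{\frac{K^{2}\pi^{2}(2+\pi^{2})^{2}}{2\log 3}\right\}|\Omega|.
\]
In particular $\Omega$ is an $A$-quasiconformal $\kappa$-regular domain for every such $\kappa$, so Theorem~\ref{Th5.1} is applicable with $\beta=\kappa$.

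Next I would substitute this bound into Theorem~\ref{Th5.1}: with $r=\frac{2\kappa}{\kappa-1}$ and the explicit bound $B_{r,2}(\mathbb D)\le(2^{-1}\pi)^{\frac{2-r}{2r}}(r+2)^{\frac{r+2}{2r}}$,
\[
\frac{1}{\mu_1(A,\Omega)}\le B_{r,2}^{2}(\mathbb D)\left(\iint_{\mathbb D}|J(w,\varphi^{-1})|^{\kappa}\,dudv\right)^{\frac{1}{\kappa}}\le B_{r,2}^{2}(\mathbb D)\,\frac{C_\kappa^{2}K^{2}\pi^{\frac{1}{\kappa}-1}}{4}\exp\left\{\frac{K^{2}\pi^{2}(2+\pi^{2})^{2}}{2\log 3}\right\}|\Omega|.
\]
The right-hand side is a constant depending only on $K$ and $\kappa$ times $|\Omega|$; optimizing over the admissible exponents I set
\[
M(K):=\left[\inf_{\kappa}\; B_{\frac{2\kappa}{\kappa-1},2}^{2}(\mathbb D)\,\frac{C_\kappa^{2}K^{2}\pi^{\frac{1}{\kappa}-1}}{4}\exp\left\{\frac{K^{2}\pi^{2}(2+\pi^{2})^{2}}{2\log 3}\right\}\right]^{-1}>0,
\]
where the infimum runs over $\kappa\in(1,\tfrac{K}{K-1})$ with $\nu<1$. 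This gives $\mu_1(A,\Omega)\ge M(K)/|\Omega|$, and since $\Omega$ is bounded with $|\Omega|=\pi R_{*}^{2}$, putting $M^{*}(K)=M(K)\pi^{-1}$ yields $\mu_1(A,\Omega)\ge M^{*}(K)/R_{*}^{2}$.

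The only genuine obstacle is bookkeeping: one must check that the set of admissible $\kappa$ is nonempty and that the constants of Corollary~\ref{Est_Der} stay controlled there, i.e. that the constraint $\nu<1$ is compatible with $1<\kappa<\frac{K}{K-1}$, so that $M(K)$ is a genuine positive constant depending only on $K$. This reduces to the elementary observation that $\nu=10^{8\kappa}\frac{2\kappa-2}{2\kappa-1}(24\pi^{2}K^{2})^{2\kappa}\to 0$ as $\kappa\downarrow 1$ while $C_\kappa$ stays finite, so small $\kappa>1$ is always admissible. Everything else is a direct chaining of the two displayed inequalities above.
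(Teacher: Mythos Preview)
Your proof is correct and follows essentially the same route as the paper: invoke Theorem~\ref{Th5.1} to bound $1/\mu_1(A,\Omega)$ by a constant times $\|J_{\varphi^{-1}}\mid L^{\beta}(\mathbb D)\|$, then use Corollary~\ref{Est_Der} to control the latter by a $K$-dependent constant times $|\Omega|$, and finally optimize in $\beta$. Your explicit check that the admissible range of exponents is nonempty (via $\nu\to 0$ as $\kappa\downarrow 1$) is a useful addition, and your $M(K)$ agrees with the paper's once the upper bound for $B_{r,2}(\mathbb D)$ is substituted.
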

The quantity $M(K)$ depends only on a quasiconformality
coefficient K of $\Omega$:
\[
M(K):= \frac{\pi}{K^2}
\exp\left\{{-\frac{K^2 \pi^2(2+ \pi^2)^2}{2\log3}}\right\}
\inf\limits_{1< \beta <\beta^{*}}
\Biggl\{
\left(\frac{2\beta -1}{\beta -1}\right)^{-\frac{2 \beta-1}{\beta}} C^{-2}_{\beta}
\Biggr\},
\]
\[
C_\beta=\frac{10^{6}}{[(2\beta -1)(1- \nu(\beta))]^{1/2\beta}},
\]
where $\beta^{*}=\min{\left(\frac{K}{K-1}, \widetilde{\beta}\right)}$, and $\widetilde{\beta}$ is the unique solution of the equation
$$\nu(\beta):=10^{8 \beta}\frac{2\beta -2}{2\beta -1}(24\pi^2K^2)^{2\beta}=1.
$$
The function $\nu(\beta)$ is a monotone increasing function. Hence for
any $\beta < \beta^{*}$ the number $(1- \nu(\beta))>0$ and $C_\beta > 0$.

\begin{proof}
Given that, for $K\geq 1$, $K$-quasidiscs are $A$-quasiconformal $\beta$-regular domains if $1<\beta<\frac{K}{K-1}$. Therefore, by Theorem~\ref{Th5.1} for $1<\beta<\frac{K}{K-1}$ we have
\begin{equation}\label{Inequal_1}
\frac{1}{\mu_1(A,\Omega)} \leq
\frac{4}{\sqrt[\beta]{\pi}} \left(\frac{2\beta -1}{\beta -1}\right)^{\frac{2 \beta-1}{\beta}} \big\|J_{\varphi^{-1}}\mid L^{\beta}(\mathbb D)\big\|.
\end{equation}
Now, using Corollary~\ref{Est_Der} we estimate the quantity $\|J_{\varphi^{-1}}\,|\,L^{\beta}(\mathbb D)\|$.
Direct calculations yield
\begin{multline}\label{Inequal_2}
\|J_{\varphi^{-1}}\,|\,L^{\beta}(\mathbb D)\| =
\left(\iint\limits_{\mathbb D} |J(w,\varphi^{-1})|^{\beta}~dudv \right)^{\frac{1}{\beta}} \\
\leq \frac{C^2_{\beta} K^2 \pi^{\frac{1-\beta}{\beta}}}{4} \exp\left\{{\frac{K^2 \pi^2(2+ \pi^2)^2}{2\log3}}\right\} \cdot |\Omega|.
\end{multline}
Finally, combining inequality \eqref{Inequal_1} with inequality \eqref{Inequal_2} after some computations, we obtain
\[
\frac{1}{\mu_1(A,\Omega)} \leq
\frac{C^2_{\beta} K^2}{\pi} \left(\frac{2\beta -1}{\beta -1}\right)^{\frac{2 \beta-1}{\beta}}  \exp\left\{{\frac{K^2 \pi^2(2+ \pi^2)^2}{2\log3}}\right\} \cdot |\Omega|.
\]
\end{proof}

Let $\varphi:\Omega \to \Omega'$ be quasiconformal mappings. We note that there exist so-called volume-preserving maps, i.e. $|J(z,\varphi)|=1$, $z \in \Omega$. Examples of such maps were considered in the previous section.
Now we construct another examples of such maps.

Let $f\in L^{\infty}(\mathbb R)$. Then $\varphi(x,y)=(x+f(y),\,y)$ is a quasiconformal mapping with a quasiconformality coefficient $K=\lambda/J_{\varphi}(x,y)$.
Here $\lambda$ is the largest eigenvalue of the matrix $Q=DD^T$, where
$D=D\varphi(x,y)$ is Jacobi matrix of mapping $\varphi=\varphi(x,y)$ and $J_{\varphi}(x,y)=\det D\varphi(x,y)$ is its Jacobian.

It is easy to see that the Jacobi matrix corresponding to the mapping $\varphi=\varphi(x,y)$ has the form
\[
D=\left(\begin{array}{cc}
1 & f'(y)\\
0 & 1
\end{array}\right).
\]

A basic calculation implies $J_{\varphi}(x,y)=1$ and
\[
\lambda=\left(1+\frac{\left(f'(y)\right)^{2}}{2}\right)\left(1+\sqrt{1-\frac{4}{\left(2+\left(f'(y)\right)^{2}\right)^{2}}}\right)\,.
\]

Therefore any mapping $\varphi=\varphi(x,y)$ is a quasiconformal mapping
from $\mathbb R^{2}\to \mathbb R^{2}$ with $J_{\varphi}(x,y)=1$ and arbitrary large quasiconformality coefficient.

We can use their restrictions $\varphi|_{\mathbb D}$ to the unit disc $\mathbb D$. Images
can be very exotic quasidiscs.

If $a>0$ then mappings $\varphi(x,y)=(ax+f(y),\,\frac{1}{a}y)$ have
similar properties.

In this case we obtain lower estimates of the first non-trivial Neumann eigenvalues of the divergent form elliptic operator $L_A$ in $A$-quasiconformal $\beta$-regular domains via the Sobolev-Poincar\'e constant for the unit disc $\mathbb D$.

\vskip 0.2cm

\textbf{Acknowledgements.} The first author was supported by the United States-Israel Binational Science Foundation (BSF Grant No. 2014055).

\vskip 0.3cm

\vskip 0.3cm

Department of Mathematics, Ben-Gurion University of the Negev, P.O.Box 653, Beer Sheva, 8410501, Israel

\emph{E-mail address:} \email{vladimir@math.bgu.ac.il} \\

Division for Mathematics and Computer Sciences, Tomsk Polytechnic University,
634050 Tomsk, Lenin Ave. 30, Russia; Regional Scientific and Educational Mathematical
Center, Tomsk State University, 634050 Tomsk, Lenin Ave. 36, Russia

							
\emph{E-mail address:} \email{vpchelintsev@vtomske.ru}   \\
			
Department of Mathematics, Ben-Gurion University of the Negev, P.O.Box 653, Beer Sheva, 8410501, Israel
							
\emph{E-mail address:} \email{ukhlov@math.bgu.ac.il}

\end{document}